\newcommand{\monthyear}[1]{%
  \def\@monthyear{\uppercase{#1}}}
\newcommand{\Q}{{\mathbb Q}}
\newcommand{\N}{{\mathbb N}}
\newcommand{\Z}{{\mathbb Z}}
\newcommand{\paren}[1]{\left( #1 \right)}
\newcommand{\cro}[1]{\left[ #1 \right]}
\newcommand{\abs}[1]{\left| #1 \right|}
\newcommand{\aco}[1]{\left\{ #1 \right\}}
\newcommand{\lcm}{\mathrm{lcm}}
\newcommand{\im}{\mathrm{im}}
\renewcommand{\det}{\mathrm{det}}
\theoremstyle{plain}
\numberwithin{equation}{section}
\newtheorem{thm}{Theorem}[section]
\newtheorem{theorem}[thm]{Theorem}
\newtheorem*{theorem*}{Theorem}
\newtheorem{lemma}[thm]{Lemma}
\newtheorem{definition}[thm]{Definition}
\newtheorem{proposition}[thm]{Proposition}
\newtheorem{corollary}[thm]{Corollary}
\newtheorem{remark}[thm]{Remark}
\begin{document}
\monthyear{May 2020}
\setcounter{page}{1}

\title{Cycles of Sums of Integers}
\author{Bruno Dular}
\email{bruno.dular@gmail.com}

\begin{abstract}
We study the period of the linear map $T:\Z_m^n\rightarrow \Z_m^n:(a_0,\dots,a_{n-1})\mapsto(a_0+a_1,\dots,a_{n-1}+a_0)$ as a function of $m$ and $n$, where $\Z_m$ stands for the ring of integers modulo $m$. Since this map is a variant of the Ducci sequence, several known results are adapted in the context of $T$. The main theorem of this paper states that the period modulo $m$ can be deduced from the prime factorization of $m$ and the periods of its prime factors. We also characterize the tuples that belong to a cycle when $m$ is prime.
\end{abstract}

\maketitle

\tableofcontents

\section{Introduction}

The aim of this paper\footnote{Published in: The Fibonacci Quarterly, Volume 58 No 2 (2020) 126.} is to study a variant of the well-known \emph{Ducci game of differences}. In this game, one starts with a $n$-tuple of integers and iterates the \emph{Ducci map} $(a_0,a_1,\dots,a_{n-1})\mapsto (|a_0-a_1|,|a_1-a_2|,\dots,|a_{n-1}-a_0|)$ to generate a \emph{Ducci sequence}. This process suggests the name \emph{Cycles of differences of integers} \cite{Lu}, which inspired the name of the present paper.

If $n$ is a power of $2$, we know that every Ducci sequence eventually vanishes, i.e., reaches the zero $n$-tuple. Else, a Ducci sequence will either vanish or enter a periodic cycle. As several authors pointed out \cite{Lu,Eh}, studying the latter case comes down to considering n-tuples that consist only of $0$ and $1$'s. Hence the Ducci map can be considered to be a linear map over $\Z_2^n$. This map can be generalized as stated in Definition \ref{def:T}, performing sums modulo $m$ for some positive integer $m$.

This variant has been introduced by Wong in \cite{Wo} and has been extensively studied by F. Breuer in \cite{Br2}, who noticed a link between Ducci sequences and cyclotomic polynomials. The results we prove here are similar, but we use an elementary approach, which allows us to solve the inseparable case, i.e., $m=p^k$ with $p$ a prime and $n$ divisible by $p$. This is the object of section \ref{sec:prime_powers}. There we also see that our method does not generalize to certain special cases ($p=2$ or $p$ a Wieferich prime), for which F. Breuer's method works.

Here $\N$ denotes the set of positive integers (with the convention $0\notin \N$).

\begin{definition}\label{def:T}
Let $m,n\in\N$. Let $$T:\Z_m^n\rightarrow\Z_m^n:\textbf{a}=(a_0,\dots,a_{n-1})\mapsto T\textbf{a}=(a_0+a_1,a_1+a_2,\dots,a_{n-1}+a_0).$$
A \emph{T-sequence} of $\Z_m^n$ is a sequence of the form $(T^r\textbf{a})_{r\geq 0}$ where $\textbf{a}\in\Z_m^n$ and it is said to be generated by the tuple $\textbf{a}$.

The tuple $\textbf{e}=(1,0,\dots,0)\in\Z_m^n$ and the T-sequence it generates are respectively called the \emph{basic tuple} and the \emph{basic T-sequence} of $\Z_m^n$.
\end{definition}

For example, the T-sequence generated by the basic tuple of $\Z_{10}^4$ starts as shown below. Note that the tuple $(2,4,6,4)$ repeats, hence the T-sequence becomes periodic at that point, with a cycle length of $4$. With the notations introduced later, we write $P(10,4)=4$.

\begin{center}
    \begin{tabular}{c c c c c c c}
    1 0 0 0 & $\mapsto$ & 1 0 0 1 & $\mapsto$ & 1 0 1 2 & $\mapsto$ \\
    1 1 3 3 & $\mapsto$ & 2 4 6 4 & $\mapsto$ & 6 0 0 6 & $\mapsto$ \\
    6 0 6 2 & $\mapsto$ & 6 6 8 8 & $\mapsto$ & 2 4 6 4 & $\mapsto$ & $\dots$
    \end{tabular}
\end{center}

\begin{remark}\label{rem:subspace}
Let $\textbf{a}$ be a tuple of $\Z_m^n$ where $m=dm'$ for some integers $d$ and $m'$. We consider $\textbf{a}$ as an element of $\Z_{m'}^n$ by identifying it with the element $\textbf{a}\mod m'$ of $\Z_{m'}^n$.
\end{remark}

As $x+y\equiv x-y\pmod{2}$, note that T-sequences are Ducci sequences when $m=2$. Several known results can then be generalized.

To simplify notation, the components of a tuple $\textbf{a}\in\Z^n_m$ are indexed from $0$ to $n-1$. We sometimes write $[\textbf{a}]_i$ for $\textbf{a}_i$. Note that addition and subtraction of the indices will always be performed modulo $n$. Thus, $\textbf{a}_i$ should be understood as $\textbf{a}_{(i\mod n)}$.

Since $\Z_m^n$ is finite, a T-sequence must be eventually periodic. The goal of this paper is to study the maximal cycle length as a function of $m$ and $n$, which we denote by $P(m,n)$. We shall detail what we mean by the length of the period.

\begin{definition}\label{def:period}
Given $m$, $n$ and $\textbf{a}\in\Z_m^n$, a positive integer $L$ is the \emph{cycle length} of the T-sequence $(T^r\textbf{a})_{r\geq 0}$ if the following conditions hold:
\begin{enumerate}
    \item There exists a positive integer $N$ such that $T^{r+L}\textbf{a}=T^r\textbf{a}$ for all $r\geq N$.
    \item Every positive integer $L'$ satisfying $T^{r+L'}\textbf{a}=T^r\textbf{a}$ for large enough $r$ is a multiple of $L$.
\end{enumerate}
The smallest such $N$ is called the \emph{pre-period}. If $N$ is the pre-period, then the finite sequences $(\textbf{a},T\textbf{a},\dots,T^{N-1}\textbf{a})$ and $(T^N\textbf{a},\dots,T^{N+L-1}\textbf{a})$ are called \emph{pre-cycle} and \emph{cycle}, respectively.

In other words, the cycle length of the T-sequence $(T^r\textbf{a})_{r\geq 0}$ is the smallest positive integer $L$ such that there exists some $N\in\N$ satisfying $T^{r+L}\textbf{a}=T^r\textbf{a}$ for all $r\geq N$.

We define $\mathcal{C}_m^n$ as the subset of $\Z_m^n$ of all tuples that belong to a cycle. It directly follows from Remark \ref{rem:subspace} that $\mathcal{C}_m^n\subset\mathcal{C}_d^n$ whenever $d$ divides $m$.
\end{definition}

To simplify notation, cyclic permutations of a cycle are also called \emph{cycles} and thus we refer to \emph{a} cycle than \emph{the} cycle.

In Section \ref{sec:characterization_cycles}, we give a characterization of $\mathcal{C}_m^n$ for some $m$ and $n$.

\begin{definition}
Given $m$ and $n$ in $\N$, we define $$P(m,n)=\max\{P\in\N : P\textrm{ is the cycle length of some T-sequence in }\Z_m^n\}.$$
It is called the \emph{period} of T-sequences in $\Z_m^n$. This defines a function $P:\N\times\N\rightarrow\N$ called the \emph{period function}.
\end{definition}

As Ehrlich pointed out in \cite{Eh}, studying the cycle length of basic T-sequences suffices to determine the period of T-sequences. Actually, Ehrlich proved this result for Ducci sequences but the proof is essentially the same for T-sequences.

\begin{proposition}\label{prop:basic}
For all $m,n\in\N$, the cycle length of the basic T-sequence of $\Z_m^n$ equals $P(m,n)$. Cycle lengths of other T-sequences in $\Z_m^n$ divide $P(m,n)$.
\end{proposition}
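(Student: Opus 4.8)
The plan is to exploit the linearity of $T$ over $\ZZ_m^n$ together with the cyclic structure provided by the index shift. The key observation is that $T$ commutes with the cyclic shift operator $S:(a_0,\dots,a_{n-1})\mapsto(a_{n-1},a_0,\dots,a_{n-2})$, since both are given by the same circulant action and addition of indices is taken modulo $n$. Because every tuple $\textbf{a}=(a_0,\dots,a_{n-1})$ can be written as a $\ZZ_m$-linear combination of the shifted basic tuples, $\textbf{a}=\sum_{i=0}^{n-1}a_i\,S^i\textbf{e}$, the whole theory reduces to understanding the basic tuple $\textbf{e}$.

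Let me sketch the proof of this. First I would show that the cycle length of any T-sequence divides the cycle length of the basic T-sequence; call the latter $P_{\textbf{e}}$. Let $N_{\textbf{e}}$ be a pre-period and $P_{\textbf{e}}$ the cycle length of the basic sequence, so $T^{k+P_{\textbf{e}}}\textbf{e}=T^k\textbf{e}$ for all $k\geq N_{\textbf{e}}$. Applying the shift $S^i$ and using $S^iT^k=T^kS^i$, we get $T^{k+P_{\textbf{e}}}(S^i\textbf{e})=T^k(S^i\textbf{e})$ for all $k\geq N_{\textbf{e}}$ and every $i$. By linearity of $T$, this identity extends to any $\ZZ_m$-linear combination, i.e. to an arbitrary $\textbf{a}\in\ZZ_m^n$: we obtain $T^{k+P_{\textbf{e}}}\textbf{a}=T^k\textbf{a}$ for all $k\geq N_{\textbf{e}}$. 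By condition (2) in Definition \ref{def:period}, the cycle length of the T-sequence generated by $\textbf{a}$ therefore divides $P_{\textbf{e}}$. This already shows that $P_{\textbf{e}}$ is a common multiple of all cycle lengths, and in particular $P_{\textbf{e}}\leq P(m,n)$ is impossible to fail in the sense that $P_{\textbf{e}}$ dominates; combined with $P_{\textbf{e}}$ being itself a cycle length, it forces $P(m,n)=P_{\textbf{e}}$.

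To finish, I would argue that $P_{\textbf{e}}$ actually equals the maximum, not merely an upper bound for divisibility. Since the basic T-sequence is one of the T-sequences in $\ZZ_m^n$, its cycle length $P_{\textbf{e}}$ is by definition at most $P(m,n)$. Conversely, any T-sequence has cycle length dividing $P_{\textbf{e}}$ by the previous paragraph, so in particular the T-sequence realizing the maximum has cycle length dividing $P_{\textbf{e}}$, whence $P(m,n)\mid P_{\textbf{e}}$ and thus $P(m,n)\leq P_{\textbf{e}}$. The two inequalities give $P(m,n)=P_{\textbf{e}}$, and the divisibility claim for arbitrary T-sequences is exactly what was shown.

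The main obstacle I anticipate is purely bookkeeping: one must be careful that the pre-periods match up so that a single $N=N_{\textbf{e}}$ works simultaneously for all the shifted generators $S^i\textbf{e}$, which is automatic here precisely because they are all shifts of the same tuple and $S$ commutes with $T$. The crucial structural input is the commutation $S^iT=TS^i$ together with $\ZZ_m$-linearity of $T$; once these are in place, the extension from $\textbf{e}$ to an arbitrary $\textbf{a}$ is immediate and the rest is the elementary divisibility argument above.
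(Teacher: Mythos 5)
Your proof is correct and is essentially the argument the paper defers to (Ehrlich's): decompose an arbitrary tuple as a $\ZZ_m$-linear combination of cyclic shifts of $\textbf{e}$, use that $T$ commutes with the shift and is linear to transfer the eventual periodicity of the basic T-sequence to every T-sequence, and conclude with condition (2) of Definition \ref{def:period}. Nothing further is needed.
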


In Section \ref{sec:basic_results} we give basic results that are useful to study more interesting properties of T-sequences. Among those we prove a generalization of the known fact that Ducci sequences of $2^n$-tuples eventually vanish.

In Sections \ref{sec:order_2_Wieferich} and \ref{sec:prime_powers} we give important theorems about the multiplicity of the period function, which are summed up in the following theorem. It is the main result of this paper.

\begin{theorem*}\label{thm:main}
Let $m,n\in\N$ with $m=p_1^{k_1}\dots p_t^{k_t}$ the prime factorization of $m$. If $p_1,\dots,p_t$ are odd and non-Wieferich\footnote{See definition \ref{def:wieferich_prime}.}, then $$P(m,n)=\lcm\paren{p_1^{k_1-1}P(p_1,n),\dots ,p_t^{k_t-1}P(p_t,n)}.$$
\end{theorem*}

\paragraph{\textbf{Acknowledgements}}
I thank my friend Lucas Michel for telling me about the question of this paper.

\section{Basic results}\label{sec:basic_results}

The first result below allows us to compute iterations of $T$ in a very simple way. It will be used extensively throughout the paper.

\begin{proposition}\label{prop:dev}
Let $\textbf{a}\in\Z_m^n$, with $m,n\in\N$. For all $r\in\N$ and $i$ such that $0\leq i<n$,\[[T^r\textbf{a}]_i\equiv\sum_{j=0}^r \binom{r}{j} \textbf{a}_{i+j}\pmod{m}.\]
\end{proposition}
\begin{proof}
We prove this by induction on $r$. For $r=0$, the result is obvious. Suppose it holds for $r$. We show that it holds for $r+1$, by using Pascal's triangle formula and manipulating the sums as follows,
\begin{align*}
    [T^{r+1}\textbf{a}]_i&\equiv[T^r\textbf{a}]_i+[T^r\textbf{a}]_{i+1}\equiv\sum_{j=0}^r \binom{r}{j} \textbf{a}_{i+j}+\sum_{j=0}^r \binom{r}{j} \textbf{a}_{i+j+1}\\
    &\equiv\binom{r}{0}\textbf{a}_i+\sum_{j=1}^r \paren{\binom{r}{j}+\binom{r}{j-1}} \textbf{a}_{i+j}+\binom{r}{r}\textbf{a}_{i+r+1}\\
    &\equiv\sum_{j=0}^{r+1} \binom{r+1}{j} \textbf{a}_{i+j}\pmod{m},
\end{align*}
which completes the proof.
\end{proof}

\begin{definition}
We say that a T-sequence $(T^r\textbf{a})_{r\geq 0}$, where $\textbf{a}\in\Z^n_m$, \emph{vanishes} if there exists a positive integer $r$ such that $T^r\textbf{a}=0$.
\end{definition}
Recall that T-sequences are Ducci sequences if $m=2$. Thus it is well known that every T-sequence of $\Z^n_2$ vanishes if and only if $n$ is a power of $2$. It has first been proven by Ciamberlini and Marengoni in \cite{CM}, and it has been reproven many times since then \cite{Br,Eh}. Actually, this result still holds when $m$ is any power of $2$. This has been proven by Wong in \cite{Wo}. We give here a shorter proof using the notations we introduced and proposition \ref{prop:dev}. In \cite{AC1}, C. Avart shows a converse to this theorem for the base case $m=2$, stating that the only tuples that vanish are the tuples obtained by concatenation of several copies of a tuple of length a power of 2.

\begin{theorem}\label{thm:power_2}
If $m=2^c$ and $n=2^d$ for some positive integers $c$, $d$, then every T-sequence of $\Z^{n}_{m}$ vanishes, that is, $P(m,n)=1$. Reciprocally, if every T-sequence of $\Z_m^n$ vanishes, then $m$ is a power of $2$.
\end{theorem}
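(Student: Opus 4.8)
The statement splits into two independent implications, and I would handle them separately.

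\emph{Forward direction.} Write $N=2^l$ for the length of the tuples. The plan is to combine Proposition \ref{prop:dev} with the parity of binomial coefficients. Taking $k=N$ in Proposition \ref{prop:dev} and recalling that indices are read modulo $N$, the extreme terms $j=0$ and $j=N$ both point to $\textbf{a}_i$ and together contribute $\binom{N}{0}+\binom{N}{N}=2$, while every intermediate coefficient $\binom{N}{j}$ with $0<j<N$ is even. Consequently each component of $T^N\textbf{a}$ is even, i.e. $T^N$ sends $\ZZ_{2^n}^{2^l}$ into the subgroup of tuples all of whose entries are divisible by $2$. This even-entry observation is the heart of the argument.

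\emph{Descent.} Since $T$ is additive we have $T^N(2\textbf{z})=2\,T^N\textbf{z}$, so applying $T^N$ to an all-even tuple yields an all-divisible-by-$4$ tuple, and an immediate induction gives that $T^{sN}$ maps $\ZZ_{2^n}^{2^l}$ into the set of tuples whose entries are divisible by $2^s$. Taking $s=n$ forces $T^{nN}\textbf{a}\equiv 0\mod 2^n$ for every $\textbf{a}$, so every T-sequence vanishes. The only tuple lying in a cycle is then $0$, whose cycle length is $1$, whence $P(2^n,2^l)=1$.

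\emph{Reverse direction.} Here I would argue by contrapositive: if $m$ is not a power of $2$, I exhibit a single non-vanishing T-sequence. The natural candidate is the constant tuple $\textbf{c}=(1,\dots,1)$, for which $T\textbf{c}=(2,\dots,2)$ and hence $T^k\textbf{c}=(2^k,\dots,2^k)$ for all $k$. This vanishes precisely when $2^k\equiv 0\mod m$ for some $k$, that is, when $m$ divides some power of $2$, which happens only if $m$ itself is a power of $2$. When $m$ is not a power of $2$ it has an odd prime factor, so $2^k\not\equiv 0\mod m$ for every $k$ and the sequence never reaches $0$. Thus if all T-sequences of $\ZZ_m^n$ vanish, $m$ must be a power of $2$.

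\emph{Main obstacle.} The only nonroutine ingredient is the parity claim $\binom{2^l}{j}\equiv 0\mod 2$ for $0<j<2^l$, which I would justify either via Lucas's theorem (the binary expansion of $2^l$ is a single $1$ followed by zeros, so any $j$ in the stated range has a binary digit exceeding the corresponding digit of $2^l$) or via Kummer's theorem (adding $j$ to $2^l-j$ in base $2$ produces at least one carry). Everything else is bookkeeping; the point that needs care is the descent, namely using the homogeneity $T^N(2\textbf{z})=2\,T^N\textbf{z}$ and checking that the factor of $2$ accumulates exactly $n$ times before the modulus $2^n$ annihilates it.
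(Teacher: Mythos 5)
Your proof is correct and follows essentially the same route as the paper: the forward direction rests on the evenness of $\binom{2^l}{j}$ for $0<j<2^l$ applied through Proposition \ref{prop:dev}, followed by iterating to accumulate $n$ factors of $2$ (the paper phrases this as an induction on the exponent of the modulus, your descent on $s$ is the same mechanism). For the converse, your constant tuple $(1,\dots,1)$ with $T^k\textbf{c}=(2^k,\dots,2^k)$ is just the eigenvector counterpart of the paper's component-sum invariant $\abs{T^k\textbf{a}}\equiv 2^k\abs{\textbf{a}}$; both are valid and essentially identical.
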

\begin{proof}
We first prove the case $c=1$. Let $\textbf{a}\in\Z^{2^d}_2$. Since $\binom{2^d}{j}$ is even for $0<j<2^d$ and by Proposition \ref{prop:dev}, we have
\[\cro{T^{2^d}\textbf{a}}_i\equiv\sum^{2^d}_{j=0}\binom{2^d}{j} \textbf{a}_{i+j}\equiv 2\textbf{a}_i\equiv 0\pmod{2}
\]
for all $i$ between $0$ and $n-1$.

We now proceed with a proof by induction. Assume the result holds for some integer $c$. We prove that it holds for $c+1$. Let $\textbf{a}\in\Z^{2^d}_{2^{c+1}}$. If considered over $\Z^{2^d}_{2^c}$, the T-sequence generated by $\textbf{a}$ vanishes. Let $r$ be an integer such that $T^r\textbf{a}\equiv\textbf{0}\pmod{2^c}$. Therefore $T^r\textbf{a}\equiv 2^c\textbf{u}\pmod{2^{c+1}}$ for some tuple $\textbf{u}$, which we can assume to consist only of $0$ and $1$'s. It follows from the base case and by linearity of $T$ that there exists some integer $r'$ such that $T^{r+r'}\textbf{a}\equiv 2^cT^{r'}\textbf{u}\equiv\textbf{0}\pmod{2^{c+1}}$, which concludes the proof.

The reciprocal follows from the proof of Proposition \ref{prop:div_order}.
\end{proof}

We denote by $H$ the \emph{left-shift} map \cite{Eh}, defined as \[H:\Z^n_m\rightarrow\Z^n_m:(a_0,a_1,\dots,a_{n-1})\mapsto(a_1,a_2,\dots,a_0).\] Thus $T=I+H$ where $I$ is the identity map. 
The following Lemma generalizes lemma $1$ in \cite{Eh}.

\begin{lemma}\label{lem:I+H}
If $p$ is a prime and $k$ is a positive integer, then $T^{p^k}=I+H^{p^k}$ as linear maps from $\Z^n_p$ into itself.
\end{lemma}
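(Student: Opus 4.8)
The plan is to invoke Proposition~\ref{prop:dev} to express $T^{p^k}\textbf{a}$ componentwise and then reduce the resulting binomial sum modulo $p$ using the stated divisibility fact. Concretely, for any $\textbf{a}\in\ZZ_m^n$ and any index $i$ with $0\leq i<n$, Proposition~\ref{prop:dev} gives
\[
\cro{T^{p^k}\textbf{a}}_i\equiv\sum_{j=0}^{p^k}\binom{p^k}{j}\textbf{a}_{i+j}\mod p.
\]
The key arithmetic input is that $p$ divides $\binom{p^k}{j}$ for every $j$ strictly between $0$ and $p^k$. First I would dispose of this divisibility claim, either by citing it as the fact already announced in the text preceding the lemma, or by a one-line argument (for instance via Kummer's theorem on carries in base $p$, or by noting that $(1+x)^{p^k}\equiv 1+x^{p^k}\pmod p$ as polynomials in $\ZZ_p[x]$).

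Given that fact, every term in the sum vanishes modulo $p$ except the two endpoints $j=0$ and $j=p^k$, whose coefficients are $\binom{p^k}{0}=\binom{p^k}{p^k}=1$. Hence the sum collapses to
\[
\cro{T^{p^k}\textbf{a}}_i\equiv\textbf{a}_i+\textbf{a}_{i+p^k}\mod p.
\]
Now I would identify the right-hand side with the action of $I+H^{p^k}$. Since $H$ is the left-shift, one has $\cro{H^{s}\textbf{a}}_i=\textbf{a}_{i+s}$ for all $s$, with indices taken modulo $n$ as per the paper's convention; in particular $\cro{H^{p^k}\textbf{a}}_i=\textbf{a}_{i+p^k}$. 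Therefore $\cro{(I+H^{p^k})\textbf{a}}_i\equiv\textbf{a}_i+\textbf{a}_{i+p^k}\pmod p$, which matches the expression above for every $i$ and every $\textbf{a}$.

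Since the two linear maps $T^{p^k}$ and $I+H^{p^k}$ agree on every component of every tuple modulo $p$, they coincide as maps $\ZZ_m^n\rightarrow\ZZ_m^n$ reduced modulo $p$, which is exactly the claimed identity $T^{p^k}=I+H^{p^k}\mod p$. I do not anticipate a genuine obstacle here: the statement is essentially the operator-level repackaging of the Freshman's Dream $(1+x)^{p^k}\equiv 1+x^{p^k}\pmod p$, with $H$ playing the role of $x$. The only point requiring minor care is the bookkeeping of indices modulo $n$, to be sure that $\cro{H^{p^k}\textbf{a}}_i=\textbf{a}_{i+p^k}$ is interpreted with the cyclic index convention, but this is automatic given the paper's stated treatment of indices.
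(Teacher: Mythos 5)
Your proposal is correct and follows exactly the route the paper indicates: apply Proposition \ref{prop:dev} to get $\cro{T^{p^k}\textbf{a}}_i\equiv\sum_{j}\binom{p^k}{j}\textbf{a}_{i+j}$, kill the interior terms using $p\mid\binom{p^k}{j}$ for $0<j<p^k$, and read off the surviving endpoints as $(I+H^{p^k})\textbf{a}$. Nothing is missing.
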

\begin{proof}
We have $T=I+H$ where $I$ is the identity map. The proof follows from Proposition \ref{prop:dev} and the fact that a prime $p$ divides $\binom{p^k}{j}$ for $0<j<p^k$.
\end{proof}

The next proposition is a generalization of Corollary 3 and Theorem 2 in \cite{Eh}.

\begin{proposition}\label{prop:prime_div}
Let $p$ be a prime and $n,K$ positive integers.
\begin{enumerate}
    \item If $p^K\equiv 1\pmod{n}$, then $P(p,n)$ divides $p^K-1$.
    \item If $p^K\equiv -1\pmod{n}$, then $P(p,n)$ divides $n(p^K-1)$.
\end{enumerate}
\end{proposition}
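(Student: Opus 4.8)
The plan is to push the computation of the iterate $T^{p^K}$ through Lemma \ref{lem:I+H} and then read off a map-level identity of the shape $T^a = T^b$ that forces the divisibility directly from Definition \ref{def:period}. I will use two facts throughout. First, $H$ is the cyclic left-shift, so it has order exactly $n$; hence $H^n = I$ and, more generally, $H^s$ depends only on $s \bmod n$. Second, to prove that a cycle length divides an integer $N$, it suffices to exhibit a single integer $s$ with $T^{s+N} = T^s$ as maps on $\ZZ_p^n$: composing both sides with $T^{k-s}$ gives $T^{k+N}\textbf{a} = T^k\textbf{a}$ for every $\textbf{a}$ and every $k \geq s$, which is precisely condition (1) of Definition \ref{def:period}; condition (2) then says the cycle length of the T-sequence generated by $\textbf{a}$ divides $N$. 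Since by Proposition \ref{prop:basic} the quantity $P(p,n)$ is the cycle length of the basic T-sequence, establishing such an identity for all tuples yields $P(p,n) \mid N$.

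For part (1), I would start from Lemma \ref{lem:I+H}, which gives $T^{p^K} = I + H^{p^K} \bmod p$. The hypothesis $p^K \equiv 1 \bmod n$ collapses the shift to $H^{p^K} = H$, so that $T^{p^K} = I + H = T$ as maps on $\ZZ_p^n$. Taking $s = 1$ and $N = p^K - 1$ in the criterion above, this identity immediately yields $T^{k+(p^K-1)}\textbf{a} = T^k\textbf{a}$ for all $k \geq 1$, and hence $P(p,n) \mid p^K - 1$.

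For part (2), the same opening gives $T^{p^K} = I + H^{p^K} \bmod p$, but now $p^K \equiv -1 \bmod n$ forces $H^{p^K} = H^{-1}$, so $T^{p^K} = I + H^{-1}$. The key algebraic step is the factorization $I + H^{-1} = H^{-1}(I+H) = H^{-1}T$, valid because $H$ and $T = I + H$ commute. I would then raise this to the $n$-th power, again using commutativity together with $H^n = I$, to obtain $T^{np^K} = (H^{-1}T)^n = H^{-n}T^n = T^n$. Applying the criterion with $s = n$ and $N = n(p^K - 1)$ gives $T^{k + n(p^K-1)}\textbf{a} = T^k\textbf{a}$ for all $k \geq n$, whence $P(p,n) \mid n(p^K - 1)$.

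The computations are short, so there is no single hard step; the points needing care are (i) the reduction of the exponent of $H$ modulo $n$, which relies on $H$ having order exactly $n$, and (ii) the passage from a map identity $T^a = T^b$ to a statement about cycle lengths. Regarding (ii), I deliberately avoid invoking any invertibility of $T$ on the cyclic set $\CC_p^n$: I cancel powers of $T$ only by composing with further powers of $T$, never by dividing, so the whole argument stays at the level of the defining conditions of the period. The only genuinely new ingredient beyond Lemma \ref{lem:I+H} is the identity $I + H^{-1} = H^{-1}T$ combined with the $n$-th power trick, which is exactly what manufactures the extra factor of $n$ appearing in case (2).
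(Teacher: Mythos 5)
Your proof is correct and follows essentially the same route as the paper: apply Lemma \ref{lem:I+H}, reduce the exponent of $H$ modulo $n$ to get $T^{p^K}=T$ in case (1) and $T^{p^K}=H^{-1}T$ (hence $T^{np^K}=T^n$) in case (2). The paper states these identities without elaboration, while you additionally spell out the passage from the map identity to the divisibility of the cycle length; this is a faithful filling-in of the same argument.
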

\begin{proof}
By Lemma \ref{lem:I+H}, we have:
\begin{enumerate}
    \item $T^{p^K}=I+H^{p^K}=I+H=T$.
    \item $T^{p^K}=I+H^{p^K}=I+H^{-1}=H^{-1}T$, hence $T^{np^K}=H^{-n}T^n=T^n$.
\end{enumerate}
\end{proof}

If $p$ and $n$ are coprime, then $K=O_p(n)$, the order of $p$ in $\Z_n$, always satisfies (1).

\section{Multiplicity of the period function}\label{sec:multiplicity}

In the following sections, we focus on the main question of this paper: can we deduce $P(m,n)$ from the prime factorization of $m=p_1^{k_1}\dots p_t^{k_t}$, knowing $P(p_i,n)$ for $i=1,\dots,t$?

The next proposition suggests a positive answer. We will often use it without reference. 

\begin{proposition}\label{prop:div}
If $d\mid m$, then $P(d,n)\mid P(m,n)$ for all $n\in\N$.
\end{proposition}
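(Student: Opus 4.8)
The plan is to exploit the fact that reduction modulo $d$ commutes with the map $T$, and then to reduce the statement about periods to a statement about the basic T-sequence via Proposition \ref{prop:basic}.

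First I would observe that, since $T$ is defined componentwise by integer addition, the reduction map $\pi:\ZZ_m^n\rightarrow\ZZ_d^n$, $\textbf{a}\mapsto\textbf{a}\mod d$ (as in Remark \ref{rem:subspace}), satisfies $\pi(T\textbf{a})=T(\pi\textbf{a})$; iterating gives $(T^k\textbf{a})\mod d=T^k(\textbf{a}\mod d)$ for every $k$. In particular $\pi$ sends the basic tuple of $\ZZ_m^n$ to the basic tuple of $\ZZ_d^n$, so the basic T-sequence of $\ZZ_d^n$ is exactly the reduction modulo $d$ of the basic T-sequence of $\ZZ_m^n$.

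Next I would set $P=P(m,n)$, which by Proposition \ref{prop:basic} is the cycle length of the basic T-sequence of $\ZZ_m^n$. By Definition \ref{def:period} there is some $N$ with $T^{k+P}\textbf{e}\equiv T^k\textbf{e}\mod m$ for all $k\geq N$. Reducing this congruence modulo $d$ and using the commutation above yields $T^{k+P}\textbf{e}\equiv T^k\textbf{e}\mod d$ for all $k\geq N$, so $P$ satisfies condition (1) of Definition \ref{def:period} for the basic T-sequence of $\ZZ_d^n$. Condition (2) of that definition then forces the cycle length of the basic T-sequence of $\ZZ_d^n$ — which equals $P(d,n)$ again by Proposition \ref{prop:basic} — to divide every integer satisfying (1), and in particular to divide $P$, giving $P(d,n)\mid P(m,n)$.

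I do not expect any genuine obstacle here: the whole argument rests on the elementary commutation of $T$ with reduction modulo $d$ together with Proposition \ref{prop:basic}. The only point that requires care is to phrase everything in terms of the basic T-sequence, so that the divisibility built into the definition of cycle length (condition (2)) can be invoked directly, rather than trying to compare the full period functions tuple by tuple.
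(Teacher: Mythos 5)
Your proof is correct and follows essentially the same route as the paper: both reduce the congruence $T^{k+P(m,n)}\textbf{e}\equiv T^k\textbf{e}\mod m$ modulo $d$ and then invoke condition (2) of Definition \ref{def:period} together with Proposition \ref{prop:basic}. Your version just spells out the commutation of $T$ with reduction modulo $d$ a little more explicitly.
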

\begin{proof}
Let $r\in\N$ be large enough so that $\textbf{a}=T^r\textbf{e}\in\mathcal{C}_m^n\subset\mathcal{C}_d^n$. The congruence $T^{P(m,n)}\textbf{a}\equiv \textbf{a}\pmod{m}$ still holds modulo $d$ since $d\mid m$. The conclusion follows from Definition \ref{def:period}.
\end{proof}

The goal of the next few sections is to study this relation more precisely.

\begin{theorem}\label{thm:multiplicity}
If $m=\prod_{i=1}^t p_i$ with $p_1,\dots,p_t$ pairwise coprime, then \[P(m,n)=\lcm \paren{P(p_1,n),\dots,P(p_t,n)}.\]
\end{theorem}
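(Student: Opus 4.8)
The plan is to establish the two divisibilities $L\mid P(m,n)$ and $P(m,n)\mid L$, where $L=\lcm\paren{P(p_1,n),\dots,P(p_r,n)}$; the Chinese Remainder Theorem will do the essential work. One divisibility is immediate: since each $p_i$ divides $m$, Proposition \ref{prop:div} yields $P(p_i,n)\mid P(m,n)$ for every $i$, and taking the least common multiple gives $L\mid P(m,n)$. Note that primality of the $p_i$ is never used here, nor will it be below; only pairwise coprimality matters.

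For the reverse divisibility I would first record that reduction modulo $p_i$ commutes with $T$: since $T$ is assembled from additions and the reduction map $\ZZ_m\rightarrow\ZZ_{p_i}$ is a ring homomorphism, one has $T^k\textbf{a}\mod p_i=T^k\paren{\textbf{a}\mod p_i}$ for every tuple $\textbf{a}$ and every $k$. By Proposition \ref{prop:basic} it suffices to treat the basic tuple $\textbf{e}$, whose reduction modulo $p_i$ is again the basic tuple of $\ZZ_{p_i}^n$. The basic T-sequence of $\ZZ_{p_i}^n$ has cycle length exactly $P(p_i,n)$ and some pre-period $N_i$; because $P(p_i,n)\mid L$, iterating the cycle relation shows $T^{k+L}\textbf{e}\equiv T^k\textbf{e}\mod p_i$ for all $k\geq N_i$.

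It remains to glue these congruences together. Setting $N=\max_i N_i$, for every $k\geq N$ the relation $T^{k+L}\textbf{e}\equiv T^k\textbf{e}$ holds modulo each $p_i$ simultaneously; since the $p_i$ are pairwise coprime with product $m$, the Chinese Remainder Theorem lifts it to $T^{k+L}\textbf{e}\equiv T^k\textbf{e}\mod m$. Thus $L$ satisfies condition (1) of Definition \ref{def:period} for the basic T-sequence of $\ZZ_m^n$, and condition (2) forces $P(m,n)\mid L$. Together with the first paragraph this gives $P(m,n)=L$.

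The argument is short and its only real content is the CRT step, so I do not expect a serious obstacle. The one point demanding care is the bookkeeping of pre-periods: the cycle-length definition only guarantees the existence of some threshold $N_i$ for each modulus, and one must align these by passing to their maximum before applying the Chinese Remainder Theorem, so that all the congruences hold for the same range of $k$. Handling this correctly is the only place where the definition of cycle length must be invoked with attention.
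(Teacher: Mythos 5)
Your proof is correct and follows essentially the same route as the paper: one divisibility comes from reducing modulo each $p_i$ (the paper packages this as showing any period $Q$ modulo $m$ is a multiple of each $P(p_i,n)$, you cite Proposition \ref{prop:div}), and the other from gluing the congruences modulo the pairwise coprime $p_i$ into a congruence modulo $m$, which is exactly the paper's footnoted CRT step. The only cosmetic differences are that you treat all $r$ factors at once rather than inducting from $r=2$, and you are more explicit about aligning the pre-periods via $N=\max_i N_i$, a point the paper passes over with ``assume $k$ is large enough.''
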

\begin{proof}
We prove this for two coprime integers. The generalization for $t$ pairwise coprime integers follows by induction. By Proposition \ref{prop:basic}, we only need to consider the basic T-sequence.

Let $p,q$ be two coprime integers. We assume here that $r$ is large enough for $T^r\textbf{e}$ to be in the different cycles. By Definition \ref{def:period}, we have
\[\begin{cases}
    T^{r+L}\textbf{e}\equiv T^r\textbf{e}\pmod{p}\\
    T^{r+L}\textbf{e}\equiv T^r\textbf{e}\pmod{q}
\end{cases},\]
where $L=\lcm(P(p,n),P(q,n))$. Since $p$ and $q$ are coprime, it directly follows\footnote{If $a\equiv b\pmod{p}$ and $a\equiv b\pmod{q}$, then $a-b=cp=dq$ for some integers $c$, $d$. Hence $q$ divides $cp$, so $q$ divides $c$ by Euclid's lemma and we get $a-b=c'pq$ for some integer $c'$.} that $T^{r+L}\textbf{e}\equiv T^r\textbf{e}\pmod{pq}$. Hence $L$ is a multiple of the period $P(pq,n)$.

We now show that $L$ satisfies $(2)$ of Definition \ref{def:period}. Suppose $Q$ is such that $T^{r+Q}\textbf{e}\equiv T^r\textbf{e}\pmod{pq}$. In particular, $T^{r+Q}\textbf{e}\equiv T^r\textbf{e}\pmod{p}$; hence $Q$ is a multiple of $P(p,n)$. Similarly, $Q$ is a multiple of $P(q,n)$. Therefore, by definition of the least common multiple, $L\leq Q$, and $L=P(pq,n)$.
\end{proof}

With this theorem in our toolbox, we can restrict our attention to the periods modulo powers of primes. The question one may ask is whether we can deduce $P(p^k,n)$ from $P(p,n)$. We will shortly determine that it is (almost) the case.

\section{Order of 2 and Wieferich primes}\label{sec:order_2_Wieferich}

\begin{definition}
For a tuple $\textbf{a}\in\Z_m^n$, we write $\abs{\textbf{a}}$ for the \emph{sum of components} of $\textbf{a}$ modulo $m$.
\end{definition}

\begin{definition}\label{def:order}
For $m>2$ odd, we use $O(m)$ to denote the order of $2$ in $\Z_m$; that is, it is the smallest integer $k$ such that $2^k\equiv 1\pmod{m}$. Its existence follows from Euler's theorem.
\end{definition}

\begin{proposition}\label{prop:div_order}
If $m>2$ is odd, then $O(m)$ divides $P(m,n)$.
\end{proposition}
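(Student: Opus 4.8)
The plan is to exploit the linear relation $\abs{T^k\textbf{a}}\equiv 2^k\abs{\textbf{a}}\mod m$ recorded just above the statement, specialised to the basic tuple $\textbf{e}$. By proposition \ref{prop:basic} the cycle length of the basic T-sequence equals $P(m,n)$, so it suffices to track the sums of components along the basic T-sequence, and the whole argument reduces to cancelling a power of $2$.

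First I would fix an integer $k$ larger than the pre-period of the basic T-sequence, which is finite since $\ZZ_m^n$ is finite, so that $T^{k+P(m,n)}\textbf{e}=T^k\textbf{e}$ by definition \ref{def:period}. Taking sums of components on both sides and using $\abs{\textbf{e}}=1$, the relation above gives the chain $2^{k+P(m,n)}\equiv\abs{T^{k+P(m,n)}\textbf{e}}=\abs{T^k\textbf{e}}\equiv 2^k\mod m$, where the outer congruences are the linearity of $\abs{\cdot}$ under $T$ and the middle equality is the cycle condition.

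The key step is then the cancellation: since $m$ is odd, $2$ is a unit in $\ZZ_m$, so $2^k$ is invertible and may be cancelled to obtain $2^{P(m,n)}\equiv 1\mod m$. By the very definition of $O(m)$ as the multiplicative order of $2$ modulo $m$, any exponent $e$ with $2^e\equiv 1\mod m$ is a multiple of $O(m)$; applying this with $e=P(m,n)$ yields $O(m)\mid P(m,n)$, as desired.

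There is essentially no serious obstacle here, as the linear behaviour of the sum of components does all the work. The only place where the hypothesis ``$m>2$ odd'' enters is in guaranteeing that $2$ is invertible modulo $m$, without which the cancellation fails; indeed, for $m$ a power of $2$ the quantity $\abs{T^k\textbf{e}}=2^k$ can collapse to $0$, as in theorem \ref{thm:power_2}. The one point deserving a word of care is that $\abs{T^k\textbf{e}}=2^k$ is genuinely nonzero modulo $m$, so that the cancellation is meaningful, and this is immediate from $\gcd(2,m)=1$.
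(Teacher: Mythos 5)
Your proof is correct and follows essentially the same route as the paper: both track the sum of components via $\abs{T^k\textbf{a}}\equiv 2^k\abs{\textbf{a}}\mod m$, specialise to the basic tuple so that $\abs{T^k\textbf{e}}=2^k$ is a unit modulo the odd integer $m$, and cancel to get $2^{P(m,n)}\equiv 1\mod m$. The only cosmetic difference is that the paper first states the dichotomy ``$2^P\equiv 1$ or $\abs{T^k\textbf{a}}\equiv 0$'' for a general tuple before specialising, whereas you cancel the invertible factor $2^k$ directly.
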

\begin{proof}
Let $r$ be a positive integer and $\textbf{a}$ be a tuple of $\Z_m^n$. By linearity, we have $\abs{T^r\textbf{a}}\equiv 2^r\abs{\textbf{a}}\pmod{m}$. If $L$ is the cycle length of the T-sequence generated by $\textbf{a}$ and $r$ is greater than the pre-period, then we must have $\abs{T^{r+L}\textbf{a}}\equiv 2^L\abs{T^r\textbf{a}}\pmod{m}$. Hence $2^L\equiv 1\pmod{m}$ or $\abs{T^r\textbf{a}}\equiv 0\pmod{m}$. Note that if $m$ is not a power of $2$, then $\abs{T^r\textbf{e}}\not\equiv 0\pmod{m}$ for all $r$. Considering the basic T-sequence thus implies that $2^{P(m,n)}$ must equal $1\pmod{m}$.
\end{proof}

\begin{definition}\label{def:wieferich_prime}
A prime $p$ is a \emph{Wieferich prime} if $2^{p-1}\equiv 1\pmod{p^2}$.
\end{definition}

Wieferich primes surprisingly occur in several number theoretical subjects \cite{HW}. It is believed that there are infinitely many such numbers. What is extraordinary about these is that we only know two of them, $1093$ and $3511$, and there are no other Wieferich primes below $10^{17}$ \cite{OEIS}. We will see in Section \ref{sec:prime_powers} that Wieferich primes are of considerable interest here. 

We can characterize Wieferich primes by the order of $2$ modulo $p^2$.
\begin{lemma}\label{lem:Wieferich}
A prime $p>2$ is a Wieferich prime if and only if $O(p)=O(p^2)$.
\end{lemma}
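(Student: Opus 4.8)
The plan is to unpack the definition of a Wieferich prime and relate it directly to the orders $O(p)$ and $O(p^2)$ via elementary divisibility reasoning about powers of $2$.

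First I would record the two basic facts I need. Let $d = O(p)$ be the order of $2$ modulo $p$, and let $D = O(p^2)$ be the order of $2$ modulo $p^2$. Since a congruence modulo $p^2$ implies the corresponding congruence modulo $p$, any exponent $k$ with $2^k \equiv 1 \mod p^2$ also satisfies $2^k \equiv 1 \mod p$; hence $d \mid D$, so $O(p)$ always divides $O(p^2)$. Moreover, by Fermat's little theorem $2^{p-1} \equiv 1 \mod p$, so $d \mid p-1$. The definition says $p$ is Wieferich exactly when $2^{p-1} \equiv 1 \mod p^2$, which by the defining property of $D$ is equivalent to $D \mid p-1$.

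Next I would prove the two implications. For the forward direction, suppose $p$ is Wieferich, so $D \mid p-1$. Combined with $d \mid D$ this gives a chain, but to get equality I argue as follows: one always has $2^d \equiv 1 \mod p$, so $2^d = 1 + cp$ for some integer $c$, and raising to the $p$-th power via the binomial theorem gives $2^{dp} = (1+cp)^p \equiv 1 \mod p^2$, which shows $D \mid dp$. Since $d \mid D$, write $D = d$ or $D = dp$ (these are the only possibilities dividing $dp$ and divisible by $d$, because $dp/d = p$ is prime). The Wieferich condition $D \mid p-1$ rules out $D = dp$, since $dp > p-1$; hence $D = d$, i.e. $O(p) = O(p^2)$. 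For the converse, if $O(p)=O(p^2)$ then $D = d \mid p-1$, so $2^{p-1} \equiv 1 \mod p^2$ and $p$ is Wieferich.

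The key structural observation, and the step I expect to require the most care, is the dichotomy $O(p^2) \in \{O(p),\, p\cdot O(p)\}$; once this is in hand the equivalence is immediate from comparing against $p-1$. I would establish this dichotomy cleanly by noting $d \mid D \mid dp$ from the two facts above, and that the only divisors of $dp$ that are multiples of $d$ are $d$ and $dp$ because $p$ is prime. This is the heart of the standard lifting-the-exponent behavior for the multiplicative order when passing from $p$ to $p^2$, and I would state it as the pivotal lemma-within-the-proof rather than grinding through it.
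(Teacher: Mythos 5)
Your proof is correct and follows essentially the same route as the paper: establish the dichotomy $O(p^2)\in\{O(p),\,pO(p)\}$ and then compare against $p-1$. The only cosmetic difference is that you show $2^{pO(p)}\equiv 1\mod p^2$ by expanding $(1+cp)^p$ with the binomial theorem, whereas the paper factors $2^{pO(p)}-1$ as a product of two terms each divisible by $p$; both are equally valid.
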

\begin{proof}
We first show that $O(p^2)$ equals either $O(p)$ or $pO(p)$. By definition, $2^{O(p^2)}\equiv 1\pmod{p^2}$. It also holds modulo $p$, so $O(p^2)$ is a multiple of $O(p)$. We have
\[2^{pO(p)}-1\equiv\paren{2^{O(p)}-1}\paren{2^{(p-1)O(p)}+2^{(p-2)O(p)}+\dots+2^{O(p)}+1}\equiv 0\pmod{p^2},\]
so $O(p^2)$ divides $pO(p)$. Thus it equals either $O(p)$ or $pO(p)$.

Suppose that $p$ is a Wieferich prime, i.e., $2^{p-1}\equiv 1\pmod{p^2}$. Then $O(p^2)$ divides $p-1$, since we cannot have $O(p^2)=pO(p)$, we have $O(p^2)=O(p)$. Conversely, if $O(p^2)=O(p)$, then $O(p^2)$ divides $p-1$, so $2^{p-1}\equiv 1\pmod{p^2}$.
\end{proof}

As the following proposition shows, the first part of the previous proof also holds for all primes $p>2$ and positive integers $k$, that is, $O(p^{k+1})$ is either $O(p^k)$ or $pO(p^k)$. In fact, as soon as it is the latter for one $k$, it is the latter for all subsequent $k$.

\begin{proposition}\label{prop:order_wief}
If $p>2$ is a prime and $k\in\N$, then we have:
\begin{enumerate}
    \item $O(p^{k+1})$ is either $O(p^k)$ or $pO(p^k)$.
    \item If $O(p^{k+1})=pO(p^k)$, then $O(p^{k+2})=pO(p^{k+1})$.
    \item If $p$ is a non-Wieferich prime, then $O(p^k)=p^{k-1}O(p)$.
\end{enumerate}
\end{proposition}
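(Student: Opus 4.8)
The plan is to establish the three items in order; item (3) will be a short induction built on (1), (2) and Lemma \ref{lem:Wieferich}, while (2) carries the real content.

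For (1), I would simply reuse the computation already performed in the proof of Lemma \ref{lem:Wieferich}, which was written there for $k=1$ but never used that restriction. Since $2^{O(p^{k+1})}\equiv 1\mod p^{k+1}$ holds a fortiori modulo $p^k$, we get $O(p^k)\mid O(p^{k+1})$. For the reverse divisibility, factor
\[2^{pO(p^k)}-1=\paren{2^{O(p^k)}-1}\sum_{i=0}^{p-1}2^{iO(p^k)};\]
the first factor is divisible by $p^k$ because $2^{O(p^k)}\equiv 1\mod p^k$, while each summand of the second factor is $\equiv 1\mod p$, so the second factor is $\equiv p\equiv 0\mod p$. Hence $p^{k+1}\mid 2^{pO(p^k)}-1$ and $O(p^{k+1})\mid pO(p^k)$. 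As $p$ is prime, the only divisors of $pO(p^k)$ that are multiples of $O(p^k)$ are $O(p^k)$ and $pO(p^k)$, which is exactly (1).

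For (2), set $d=O(p^k)$ and assume $O(p^{k+1})=pd$. This means $2^d\equiv 1\mod p^k$ but $2^d\not\equiv 1\mod p^{k+1}$, so we may write $2^d=1+cp^k$ with $p\nmid c$. By (1) applied at level $k+1$ we have $O(p^{k+2})\in\aco{O(p^{k+1}),pO(p^{k+1})}$, so it suffices to show $O(p^{k+2})\neq O(p^{k+1})$, i.e. that $2^{pd}\not\equiv 1\mod p^{k+2}$. Expanding with the binomial theorem,
\[\paren{1+cp^k}^p=1+cp^{k+1}+\sum_{j=2}^{p-1}\binom{p}{j}c^jp^{jk}+c^pp^{pk}.\]
For $2\leq j\leq p-1$ the coefficient $\binom{p}{j}$ is divisible by $p$, so that term has $p$-adic valuation at least $jk+1\geq 2k+1\geq k+2$, and the last term has valuation $pk\geq 3k\geq k+2$; this is where $p\geq 3$ and $k\geq 1$ enter. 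Thus $2^{pd}\equiv 1+cp^{k+1}\mod p^{k+2}$, which is $\not\equiv 1$ since $p\nmid c$. This forces $O(p^{k+2})=pO(p^{k+1})$, proving (2).

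Finally (3) is an induction on $k$. The non-Wieferich hypothesis is, by Lemma \ref{lem:Wieferich}, exactly $O(p)\neq O(p^2)$, so (1) provides the base case $O(p^2)=pO(p)$; applying (2) repeatedly then yields $O(p^{j+1})=pO(p^j)$ for every $j\geq 1$, and telescoping gives $O(p^k)=p^{k-1}O(p)$. The main obstacle is the valuation bookkeeping in (2): one must verify that every term of $\paren{1+cp^k}^p$ beyond the linear one lands in $p^{k+2}\ZZ$. This is precisely the lifting-the-exponent phenomenon, and it is where oddness of $p$ is essential: for $p=2$ the square term $c^2 2^{2k}$ already enters at valuation $k+1$ when $k=1$ and can cancel the linear term, breaking the strict growth of the order.
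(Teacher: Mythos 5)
Your proposal is correct and follows essentially the same route as the paper: the geometric-sum factorization of $2^{pO(p^k)}-1$ for (1), the binomial expansion of $\paren{1+cp^k}^p$ with the observation that $p\mid\binom{p}{j}$ kills the middle terms for (2), and induction from Lemma \ref{lem:Wieferich} for (3). Your version is somewhat more careful than the paper's, since you make the valuation bounds $jk+1\geq k+2$ and $pk\geq k+2$ explicit and point out exactly where $p>2$ and $k\geq 1$ are needed.
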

\begin{proof}
The proof of $(1)$ is similar to the first part of the proof of Lemma \ref{lem:Wieferich} and (3) follows from (1) and (2) by induction. We show (2):

Suppose $O(p^{k+1})=pO(p^k)$. Then $2^{O(p^k)}\equiv 1+lp^k\pmod{p^{k+1}}$ where $l\not\equiv 0\pmod{p}$. Hence $2^{O(p^k)}\equiv 1+lp^k+l'p^{k+1}\equiv 1+p^k(l+l'p)\pmod{p^{k+2}}$ for some $l'$. By the binomial theorem and the fact that $p$ divides $\binom{p}{j}$ for $0<j<p$, we have $2^{pO(p^k)}\equiv 1+p^{k+1}(l+l'p)\equiv 1+lp^{k+1}\pmod{p^{k+2}}$. Since $l\not\equiv 0\pmod{p}$, we have $2^{pO(p^k)}\not\equiv 1 \pmod{p^{k+2}}$, so $O(p^{k+2})$ must equal $p^2 O(p^k)$. This concludes the proof.
\end{proof}

For the known Wieferich primes, we have $O(p^3)=pO(p^2)$. Hence by (3) of the previous proposition, it follows that $O(p^k)=p^{k-2}O(p)$ for all $k\geq 2$.

\section{Period modulo powers of primes}\label{sec:prime_powers}

Propositions \ref{prop:div_order} and \ref{prop:order_wief} suggest a similar induction relation for the period function. Indeed, if $p$ is an odd prime and $n\in\N$, then $O(p^k)\mid P(p^k,n)$ for all positive integers $k$. The fact that $(O(p^k))_{k\in\N}$ eventually grows as a geometric sequence forces $(P(p^k,n))_{k\in\N}$ to behave in the same way.

\begin{theorem}\label{thm:induction}
If $p$ is a prime and $k,n\in\N$, then we have:
\begin{enumerate}
    \item $P(p^{k+1},n)$ is either $P(p^k,n)$ or $pP(p^k,n)$.
    \item If $k\geq 2$ and $P(p^{k+1},n)=pP(p^k,n)$, then $P(p^{k+2},n)=pP(p^{k+1},n)$.
    \item If $P(p^{N+1},n)=pP(p^N,n)$ for some $N\geq 2$, then $P(p^{N+k},n)=p^kP(p^N,n)$ for all $k\in\N$.
\end{enumerate}
\end{theorem}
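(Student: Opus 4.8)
The plan is to mimic the structure of Proposition \ref{prop:order_wief}, transferring the multiplicative behaviour of the order of $2$ to the period function. The key bridge is that by Proposition \ref{prop:basic} the period $P(p^k,n)$ is the cycle length of the basic T-sequence, and by linearity one has $\abs{T^j\textbf{e}}\equiv 2^j\mod p^k$, so the ``sum of components'' functional couples the period to $O(p^k)$. I would first establish the containment $\CC_{p^{k+1}}^n\subset\CC_{p^k}^n$ (this is the remark after Definition \ref{def:period}), which immediately gives $P(p^k,n)\mid P(p^{k+1},n)$ by Proposition \ref{prop:div}. Thus the only question is by how much the period can grow from level $k$ to level $k+1$.

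For part (1), write $P=P(p^k,n)$ and take $\textbf{a}=T^r\textbf{e}\in\CC_{p^{k+1}}^n$ for large $r$, so that $\textbf{a}$ lies in a cycle modulo both $p^k$ and $p^{k+1}$. Since $T^P\textbf{a}\equiv\textbf{a}\mod p^k$, we may write $T^P\textbf{a}\equiv\textbf{a}+p^k\textbf{u}\mod p^{k+1}$ for some tuple $\textbf{u}$. Iterating and using linearity of $T$ together with the characteristic-$p$ identity $T^{p^k}\equiv I+H^{p^k}\mod p$ from Lemma \ref{lem:I+H}, I expect to show $T^{pP}\textbf{a}\equiv\textbf{a}+p^{k}(\,\text{something}\equiv 0\!\mod p\,)\equiv\textbf{a}\mod p^{k+1}$, so that $pP$ is always a period modulo $p^{k+1}$. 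Combined with $P\mid P(p^{k+1},n)\mid pP$ (the upper bound coming from the previous sentence, the lower from Proposition \ref{prop:div}) and the fact that the index $[\,pP:P\,]=p$ is prime, the period can only be $P$ or $pP$.

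For part (2) and (3), the engine is the $p$-adic valuation argument of Proposition \ref{prop:order_wief}(2): once the period genuinely jumps at level $k$, the ``defect'' tuple $\textbf{u}$ above is nonzero modulo $p$, and applying $T^P$ again introduces a fresh $p$-power in a way that persists, forcing another jump at the next level. I would make this precise by tracking $T^{P(p^k,n)}\textbf{a}-\textbf{a}\pmod{p^{k+2}}$ and showing its leading $p^k$-coefficient is noninvertible only after a genuine jump, exactly paralleling the $1+lp^k$ bookkeeping in that earlier proof; then (3) is immediate induction on (2). The main obstacle I anticipate is the hypothesis $k\geq 2$ in part (2): the step $T^{p\,\cdot}$ reduction relies on $p$ dividing binomials $\binom{p}{j}$ cleanly, but to conclude the valuation strictly increases one needs the cross terms $p^{2k}$ to be absorbed, i.e. $2k\geq k+2$, which fails precisely for $k=1$ with $p=2$ (the Wieferich/$p=2$ obstruction flagged in the introduction). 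Handling these carry terms rigorously—showing the jump propagates without the low-level anomalies—will be the delicate part, and it is exactly where the restriction to $k\geq 2$ and odd, non-Wieferich primes enters.
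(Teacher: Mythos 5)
Your outline for part (1) has the right target ($P\mid P(p^{k+1},n)\mid pP$ with the upper bound coming from iterating the defect), but the mechanism you propose to reach it does not work, and the actual key idea is missing. Writing $T^{L}\textbf{a}=\textbf{a}+p^k\textbf{u}$ with $L=P(p^k,n)$, the computation $T^{2L}\textbf{a}=T^L\textbf{a}+p^kT^L\textbf{u}$ only telescopes to $\textbf{a}+jp^k\textbf{u}$ after $j$ steps if you know $T^L\textbf{u}\equiv\textbf{u}\mod p$. That is the crux, and it does not come from Lemma \ref{lem:I+H} (the identity $T^{p^k}\equiv I+H^{p^k}\mod p$ says nothing about $T^L$ for the relevant $L$), nor from the sum-of-components functional $\abs{\cdot}$, which only tracks the scalar $2^j$ and gives the one-sided divisibility $O(p^k)\mid P(p^k,n)$, not the tuple-level periodicity needed here. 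The paper's argument is: $p^k\textbf{u}=T^L\textbf{a}-\textbf{a}$ is a difference of cycle elements, hence (by linearity) itself in a cycle modulo $p^{k+1}$, hence $\textbf{u}$ is in a cycle of $\ZZ_p^n$; since $P(p,n)\mid L$, this forces $T^L\textbf{u}\equiv\textbf{u}\mod p$, and then $T^{pL}\textbf{a}\equiv\textbf{a}+p\cdot p^k\textbf{u}\equiv\textbf{a}\mod p^{k+1}$. Without identifying that $\textbf{u}$ lives in a cycle one modulus down, the ``something $\equiv 0\mod p$'' you expect to appear cannot be justified.

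Your diagnosis of the hypothesis $k\geq 2$ in part (2) is also off. There are no $p^{2k}$ cross terms and no binomial coefficients in this argument; the error term is $p^k\cdot p\textbf{v}=p^{k+1}\textbf{v}$, which survives modulo $p^{k+2}$ if one only controls $T^L\textbf{u}$ modulo $p$. The condition $k\geq 2$ is needed so that $P(p^2,n)\mid P(p^k,n)=L$ (Proposition \ref{prop:div}), which upgrades the control to $T^L\textbf{u}\equiv\textbf{u}\mod p^2$ (because $\textbf{u}$, now viewed in $\ZZ_{p^2}^n$, is in a cycle modulo $p^2$); then $T^{pL}\textbf{a}\equiv\textbf{a}+p^{k+1}\textbf{u}\not\equiv\textbf{a}\mod p^{k+2}$ since $\textbf{u}\not\equiv 0\mod p$. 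In particular this restriction has nothing to do with $p=2$ or Wieferich primes: Theorem \ref{thm:induction} is stated and proved for \emph{all} primes, and $k\geq 2$ is required for every $p$ (the failure at $k=1$ is witnessed by $P(2,3)=3$, $P(4,3)=P(8,3)=6$ in Proposition \ref{prop:exception_3}, but the Wieferich condition only enters later, in Proposition \ref{prop:period_p^2} and Theorem \ref{thm:power_primes}). Once (1) and (2) are established correctly, your reduction of (3) to induction is fine.
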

\begin{proof}
In (1) and (2), we choose $r$ sufficiently large for $\textbf{a}=T^r\textbf{e}$ to be in a cycle of $\Z_{p^{k+1}}^n$ and $\Z_{p^{k+2}}^n$, respectively.

\medskip
\paragraph{(1)} Let $L=P(p^k,n)$. Since $\textbf{a}$ is in a cycle modulo $p^{k+1}$, it is also in a cycle modulo $p^k$ and $T^{P(p^{k+1},n)}\textbf{a}\equiv \textbf{a}\pmod{p^k}$. Hence $L$ divides $P(p^{k+1},n)$.

We have $T^L\textbf{a}\equiv \textbf{a}\pmod{p^k}$, so $T^L\textbf{a}=\textbf{a}+p^k\textbf{u}$ for some tuple $\textbf{u}$ that we can consider to be in $\Z_p^n$. By linearity of $T$, the tuple $p^k\textbf{u}=T^L\textbf{a}-\textbf{a}$ is in a cycle modulo $p^{k+1}$. It implies that $\textbf{u}$ is in a cycle in $\Z_p^n$, hence $T^L\textbf{u}=\textbf{u}+p\textbf{v}$ for some tuple $\textbf{v}$. Then, by linearity, $$T^{2L}\textbf{a}\equiv T^L\textbf{a}+p^kT^L\textbf{u}\equiv \textbf{a}+p^k\textbf{u}+p^k(\textbf{u}+p\textbf{v})\equiv \textbf{a}+2p^k\textbf{u}\pmod{p^{k+1}}.$$ By iterating $T^L$ on $\textbf{a}$, we then obtain $T^{pL}\textbf{a}\equiv\textbf{a}+pp^k\textbf{u}\equiv\textbf{a}\pmod{p^{k+1}}$. Hence $P(p^{k+1},n)$ divides $pL$.

Therefore, $P(p^{k+1},n)$ is either $L$ or $pL$.

\medskip
\paragraph{(2)} Suppose $k\geq 2$ and let $L=P(p^k,n)$, $L_1=P(p^{k+1},n)$ and $L_2=P(p^{k+2},n)$. Suppose $L_1=pL$. By (1), we know that $L_2$ is either $L_1$ or $pL_1$. We show that it equals the latter.

Since $\textbf{a}$ is in a cycle modulo $p^{k+2}$, it is also in a cycle modulo $p^{k+1}$ and $p^k$. Then $T^L\textbf{a}\equiv \textbf{a}\pmod{p^k}$, but this congruence does not hold modulo $p^{p+1}$, for we assumed that $L_1=pL$. This implies that $T^L\textbf{a}=\textbf{a}+p^k\textbf{u}$ where $\textbf{u}\not\equiv 0\pmod{p}$.

By linearity of $T$, the tuple $p^k\textbf{u}= T^L\textbf{a}-\textbf{a}$ is in a cycle modulo $p^{k+2}$, hence $\textbf{u}$ is in a cycle modulo $p^2$. The condition $k\geq 2$ implies that $P(p^2,n)$ divides $L=P(p^k,n)$. We then have, by the same argument as in (1),  $$T^{L_1}\textbf{a}\equiv T^{pL}\textbf{a}\equiv \textbf{a}+p^{k+1}\textbf{u}\not\equiv\textbf{a}\pmod{p^{k+2}}.$$
Therefore, $L_2\neq L_1$, so $L_2=pL_1$.

\medskip
\paragraph{(3)} This follows directly from (1) and (2) by induction.
\end{proof}

The following proposition exhibits the fact that the condition $k\geq 2$ is needed for (2) in the previous theorem to hold.

\begin{proposition}\label{prop:exception_3}
We have $P(2,3)=3$ and $P(2^k,3)=6$ for all positive integers $k>1$.
\end{proposition}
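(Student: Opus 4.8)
The key observation is that $n=3$ is small enough to compute $T^3$ explicitly via Proposition \ref{prop:dev} and reduce everything to the sum of components $\abs{\textbf{a}}$. First I would establish the closed form
\[\cro{T^3\textbf{a}}_i\equiv 2\textbf{a}_i+3\textbf{a}_{i+1}+3\textbf{a}_{i+2}\equiv -\textbf{a}_i+3\abs{\textbf{a}}\mod 2^k,\]
using $\binom30=\binom33=1$, $\binom31=\binom32=3$, the fact that indices are read modulo $3$, and $\textbf{a}_{i+1}+\textbf{a}_{i+2}=\abs{\textbf{a}}-\textbf{a}_i$. In compact form this reads $T^3\textbf{a}\equiv-\textbf{a}+3\abs{\textbf{a}}\,(1,1,1)\mod 2^k$.

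Next I would exploit that the sum of components scales by $2$ at each step: $\abs{T\textbf{a}}\equiv 2\abs{\textbf{a}}$, so if $\textbf{a}$ lies in a cycle of length $P$ then $(2^P-1)\abs{\textbf{a}}\equiv 0\mod 2^k$, and since $2^P-1$ is odd this forces $\abs{\textbf{a}}\equiv 0\mod 2^k$. Substituting into the formula gives $T^3\textbf{a}\equiv-\textbf{a}$, hence $T^6\textbf{a}\equiv\textbf{a}\mod 2^k$ for every $\textbf{a}\in\CC_{2^k}^3$. By Proposition \ref{prop:basic} this already shows that $P(2^k,3)$ divides $6$ for all $k$.

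To pin down the exact value I would argue as follows. For $k=1$, reducing the formula modulo $2$ gives $T^3\textbf{a}\equiv\textbf{a}$ on cycles (since $-\textbf{a}\equiv\textbf{a}$ and $3\abs{\textbf{a}}\equiv 0$), so $P(2,3)\mid 3$; it is not $1$ because the only tuple fixed by $T$ is $\textbf{0}$ while $n=3$ is not a power of $2$ (Theorem \ref{thm:power_2}), whence $P(2,3)=3$. For $k\geq 2$, Proposition \ref{prop:div} yields $3=P(2,3)\mid P(2^k,3)\mid 6$, so $P(2^k,3)\in\aco{3,6}$. I would exclude $3$ by noting that $P(2^k,3)=3$ would force $T^3\textbf{a}\equiv\textbf{a}$, i.e. $2\textbf{a}\equiv 0\mod 2^k$, for all cycle tuples, meaning every component is divisible by $2^{k-1}$; but the basic cycle reduces modulo $2$ to the nonzero cycle of $\ZZ_2^3$, so it contains a tuple with an odd component, which is not $\equiv 0\mod 2^{k-1}$ when $k\geq 2$ — a contradiction. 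Therefore $P(2^k,3)=6$.

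The main obstacle is conceptual rather than computational: the natural inductive tool, Theorem \ref{thm:induction}, cannot be invoked here, because the jump $P(4,3)=2P(2,3)$ happens precisely at the level $k=1$ that part (2) excludes — which is exactly the phenomenon this proposition is meant to exhibit. One therefore cannot bootstrap the answer from a handful of computed values, and it is the global divisibility $P(2^k,3)\mid 6$, valid uniformly in $k$ and obtained from the explicit $T^3$ formula, that genuinely rules out a spurious jump at some large $k$.
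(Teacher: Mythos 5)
Your proof is correct, and it takes a genuinely different route from the paper's. The paper computes the cycle of the basic T-sequence of $\ZZ_4^3$ explicitly and then runs an induction on $k$ that tracks the precise shape $\big((a,a,b),(d,c,c),(a,b,a),\dots\big)$ of the six-element cycle as $k$ grows, using the splitting $(d,c,c)=2^{k-1}(1,1,1)+(d',c',c')$. You instead derive the closed identity $T^3\textbf{a}\equiv-\textbf{a}+3\abs{\textbf{a}}(1,1,1)\mod 2^k$ from Proposition \ref{prop:dev}, observe that $\abs{\textbf{a}}\equiv 0\mod 2^k$ for every cycle tuple (since $2^P-1$ is odd and invertible modulo $2^k$), and conclude that $T^3$ acts as $-I$ on $\CC_{2^k}^3$, giving $P(2^k,3)\mid 6$ uniformly in $k$; your parity argument, comparing $2\textbf{a}\equiv 0\mod 2^k$ with the odd components of the mod-$2$ reduction of the basic cycle, then forces the value $6$ for $k\geq 2$. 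This is shorter, avoids the bookkeeping of the paper's induction, and, as you rightly stress, some uniform-in-$k$ statement is structurally required because Theorem \ref{thm:induction}(2) is unavailable at level $k=1$; the paper's induction buys that uniformity by exhibiting the cycles concretely (which also yields the extra information of what the cycles look like), while your identity isolates the algebraic reason the period stabilizes at $6$. One small repair: to rule out $P(2,3)=1$ you cite Theorem \ref{thm:power_2}, but its reciprocal concerns the modulus $m$ being a power of $2$ (and here $m=2$ is one), not the tuple length $n$; what you actually need is that not every T-sequence of $\ZZ_2^3$ vanishes, which is the classical Ciamberlini--Marengoni statement quoted in the prose before that theorem, or simply the one-line computation $T\textbf{e}=(1,0,1)$, $T^4\textbf{e}=T\textbf{e}\neq\textbf{0}$.
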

\begin{proof}
By Theorem \ref{prop:basic}, computing the first iterations of the basic T-sequence gives $P(2,3)=3$.

We prove that $P(2^k,3)=6$ for all positive integers $k>1$ by induction. The idea of the argument is to show that, for all $k>1$:
\begin{enumerate}
    \item The pre-period of the basic T-sequence of $\Z_{2^k}^3$ is $N_k=k+1$ and it has a cycle of the form 
    \begin{equation}\label{eq:cycle}
    \big((a,a,b),(d,c,c),(a,b,a),(c,c,d),(b,a,a),(c,d,c)\big) 
    \end{equation}
    where $a,b,c,d\in\Z_{2^k}$, with $a,b\leq 2^{k-1}\leq c,d$.
    \item The pre-period of the basic T-sequence of $\Z_{2^{k+1}}^3$ is $N_{k+1}=k+2$ and it has a cycle of the form $$\big((d,c,c),(a',b',a'),(c,c,d),(b',a',a'),(c,d,c),(a',a',b')\big)$$ where $a',b'\in\Z_{2^{k+1}}$ and $c,d\leq 2^k\leq a',b'$ (Note that $c$ and $d$ are those from (1)). In other words, it means that the cycle in $\Z_{2^{k+1}}^3$ starts at the second tuple of the cycle in $\Z_{2^k}^3$.
\end{enumerate}

For the base cases, $k=2$ and $3$, the cycles of the basic T-sequences are respectively $$\big((1,1,2),(2,3,3),(1,2,1),(3,3,2),(2,1,1),(3,2,3)\big)$$ and $$\big((2,3,3),(5,6,5),(3,3,2),(6,5,5),(3,2,3),(5,5,6)\big),$$ so (1) and (2) are satisfied.

Now let $k>1$ and suppose (1) is satisfied. Since $c,d\geq 2^{k-1}$, we have $(d,c,c)=2^{k-1}(1,1,1)+(d',c',c')$ for some $c',d'<2^{k-1}$. Then by linearity, $$2^k(1,1,1)+(d'+c',c'+c',d'+c')=T(d,c,c)\equiv (a,b,a)\pmod{2^k},$$ hence $(d'+c',c'+c',d'+c')=(a,b,a)$ since $a,b,c'+c',c'+d'$ are smaller than $2^k$. If we let $a'=2^k+a$ and $b'=2^k+b$, we have $T(d,c,c)=(a',b',a')$ and $T^2(d,c,c)\equiv (c,c,d)\pmod{2^{k+1}}$. By symmetry of this argument under cyclic permutations, we have $T^3(d,c,c)\equiv (b',a',a')$, $T^4(d,c,c)\equiv (c,d,c)$, $T^5(d,c,c)\equiv (a',a',b')$ and $T^6(d,c,c)\equiv(d,c,c)$, as desired. Then $N_{k+1}=N_k+1$ and (2) holds.
\end{proof}

The goal of the rest of this section is to show the main theorem of this paper, stated in the Introduction. To do that, we need to find \emph{base cases} in order to use (3) of Theorem \ref{thm:induction}. We first prove the case $p\nmid n$ in Proposition \ref{prop:period_p^2}. Then we will use combinatorial congruences to deduce the case $p\mid n$.

\begin{proposition}\label{prop:period_p^2}
Let $p>2$ be a non-Wieferich prime and $n\in\N$. If $n$ is not a multiple of $p$, then $P(p^2,n)=pP(p,n)$ and $P(p^3,n)=p^2P(p,n)$.
\end{proposition}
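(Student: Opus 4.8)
The plan is to combine the dichotomy from Theorem \ref{thm:induction}(1) with a divisibility argument based on the order of $2$, the single new input being the observation that $p\nmid P(p,n)$ whenever $\gcd(n,p)=1$. Recall that Theorem \ref{thm:induction}(1) tells us $P(p^{k+1},n)\in\{P(p^k,n),\,pP(p^k,n)\}$, so at each step it suffices to exclude the smaller of the two candidates; I will do this by showing that a suitable power of $p$ is forced to divide the left-hand side through Proposition \ref{prop:div_order}.

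First I would establish the key fact that $p\nmid P(p,n)$. Since $p$ is prime and $n$ is not a multiple of $p$, the integers $n$ and $p$ are coprime, so $K=O_p(n)$ satisfies $p^K\equiv 1\mod n$. Proposition \ref{prop:prime_div}(1) then gives $P(p,n)\mid p^K-1$, and as $p\nmid p^K-1$ we conclude $p\nmid P(p,n)$. This is exactly where the hypothesis that $p$ does not divide $n$ enters, and it is really the heart of the matter.

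Next I would prove $P(p^2,n)=pP(p,n)$. By Theorem \ref{thm:induction}(1), $P(p^2,n)$ equals either $P(p,n)$ or $pP(p,n)$. On the other hand, $p^2>2$ is odd, so Proposition \ref{prop:div_order} gives $O(p^2)\mid P(p^2,n)$, and since $p$ is non-Wieferich, Proposition \ref{prop:order_wief}(3) yields $O(p^2)=pO(p)$; in particular $p\mid P(p^2,n)$. Because $p\nmid P(p,n)$, the candidate $P(p,n)$ is excluded, so $P(p^2,n)=pP(p,n)$.

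Finally I would treat the cube by repeating the same mechanism, since Theorem \ref{thm:induction}(2) cannot be applied directly: it requires the relation $P(p^{k+1},n)=pP(p^k,n)$ at some $k\geq 2$, whereas we have produced it only at $k=1$. Instead, Theorem \ref{thm:induction}(1) gives that $P(p^3,n)$ is $P(p^2,n)$ or $pP(p^2,n)$, while Propositions \ref{prop:div_order} and \ref{prop:order_wief}(3) give $p^2O(p)=O(p^3)\mid P(p^3,n)$, hence $p^2\mid P(p^3,n)$. Since $P(p^2,n)=pP(p,n)$ with $p\nmid P(p,n)$, the exact power of $p$ dividing $P(p^2,n)$ is just $p^1$, so $p^2\nmid P(p^2,n)$ and the smaller candidate is again excluded. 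Therefore $P(p^3,n)=pP(p^2,n)=p^2P(p,n)$. The only genuine obstacle is the coprimality step $p\nmid P(p,n)$; once that is in hand, both equalities follow by the same bookkeeping of powers of $p$ against the forced factors coming from $O(p^k)$.
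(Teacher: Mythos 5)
Your proposal is correct and follows essentially the same route as the paper: exclude the smaller candidate from Theorem \ref{thm:induction}(1) by noting that $P(p,n)\mid p^{O_p(n)}-1$ is prime to $p$ while $O(p^k)=p^{k-1}O(p)$ forces the needed power of $p$ into $P(p^k,n)$ via Proposition \ref{prop:div_order}. The paper only sketches the $p^3$ case as ``very similar,'' and your explicit bookkeeping of $v_p$ there (including the remark that Theorem \ref{thm:induction}(2) is not applicable since $k=1$) fills that in exactly as intended.
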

\begin{proof}
By Theorem \ref{thm:induction}, we know that $P(p^2,n)$ is either $P(p,n)$ or $pP(p,n)$. Given the fact that $p$ and $n$ are coprime, Proposition \ref{prop:prime_div} tells us that $P(p,n)$ divides $p^{O_p(n)}-1$, so it cannot be a multiple of $p$. However, by Proposition \ref{prop:div_order}, $P(p^2,n)$ is a multiple of $O(p^2)$, which equals $pO(p)$ for $p$ is non-Wieferich (Proposition \ref{prop:order_wief}). Therefore, $p$ divides $P(p^2,n)$ and we must have $P(p^2,n)=pP(p,n)$.

Similarly, $P(p^3,n)$ is either $pP(p,n)$ or $p^2P(p,n)$. Since $O(p^3)=p^2O(p)$ divides $P(p^3,n)$ and $pP(p,n)$ is not divisible by $p^2$, we must have $P(p^3,n)=p^2P(p,n)$.
\end{proof}

\begin{corollary}
If $p>2$ is a non-Wieferich prime and $n$ is not a multiple of $p$, then $P(p^k,n)=p^{k-1}P(p,n)$ for all $k\in\N$.
\end{corollary}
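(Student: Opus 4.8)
The plan is to obtain this corollary as an immediate consequence of Proposition \ref{prop:period_p^2} together with part (3) of Theorem \ref{thm:induction}; no new computation is required, since Proposition \ref{prop:period_p^2} already supplies both the base values and the hypothesis that launches the inductive doubling.

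First I would record the two identities from Proposition \ref{prop:period_p^2}:
$$P(p^2,n)=pP(p,n)\qquad\text{and}\qquad P(p^3,n)=p^2P(p,n).$$
Rewriting the second as $P(p^3,n)=p\cdot P(p^2,n)$ exhibits the equation $P(p^{N+1},n)=pP(p^N,n)$ at $N=2$, which is precisely the hypothesis of part (3) of Theorem \ref{thm:induction}. It is worth noting why the companion identity $P(p^3,n)=p^2P(p,n)$ is genuinely needed and not a luxury: the identity $P(p^2,n)=pP(p,n)$ alone reads $P(p^{k+1},n)=pP(p^k,n)$ with $k=1$, and both parts (2) and (3) of Theorem \ref{thm:induction} require the starting exponent to be at least $2$, so one cannot bootstrap directly from the $p$-to-$p^2$ step.

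Next I would apply part (3) with $N=2$ to get $P(p^{2+k},n)=p^kP(p^2,n)$ for every $k\in\NN$. Substituting $P(p^2,n)=pP(p,n)$ yields $P(p^{2+k},n)=p^{k+1}P(p,n)$, and reindexing with $j=k+2$ gives $P(p^j,n)=p^{j-1}P(p,n)$ for all $j\geq 3$. Finally I would dispatch the two remaining exponents: the case $j=1$ is trivial because $p^0=1$, while $j=2$ is exactly the first identity above. Assembling these, the formula $P(p^k,n)=p^{k-1}P(p,n)$ holds for every $k\in\NN$.

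Since each step merely invokes a previously established result, there is no substantial obstacle. The only point demanding care is the index bookkeeping when translating the conclusion of part (3) of Theorem \ref{thm:induction} into the stated formula, making sure that after the substitution and the shift $j=k+2$ the exponent of $p$ settles on $j-1$ rather than $j$ or $j+1$.
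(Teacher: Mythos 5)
Your proof is correct and follows exactly the route the paper intends (the paper leaves the corollary without an explicit proof, but its placement after Proposition \ref{prop:period_p^2} and the remark that Theorem \ref{thm:induction} ``will permit us to conclude'' indicate precisely this argument). Your observation that the identity $P(p^3,n)=p^2P(p,n)$ is indispensable --- because parts (2) and (3) of Theorem \ref{thm:induction} require the starting exponent to be at least $2$ --- correctly identifies why the proposition proves two identities rather than one.
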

\begin{proof}
It follows from Proposition \ref{prop:period_p^2} and (3) in Theorem \ref{thm:induction}.
\end{proof}

To generalize Proposition \ref{prop:period_p^2} for any $n\in\N$, we first need to prove a few lemmas.
\medskip

The \emph{p-adic valuation} of an integer $n$ is the exponent of the largest power of $p$ that divides $n$. It is denoted by $v_p(n)$. We write $s_p(n)$ the sum of the digits of $n$ when written in base $p$. If $p$ is a prime, \emph{Legendre's formula} \cite{Le} states that $$v_p(n!)=\frac{n-s_p(n)}{p-1}.$$
Note that $v_p(ab)=v_p(a)+v_p(b)$ and $v_p(a/b)=v_p(a)-v_p(b)$ for all integers $a,b$.

The following Lemma generalizes the fact that $p$ divides $\binom{p^v}{n}$ for all $0<n<p^v$ but not for $n=0$ and $p^v$. We will use the cases $s=2$ and $3$ later in Proposition \ref{prop:base_case}.

\begin{lemma}\label{lem:Leg_formula}
Let $p$ be a prime and $v,s\in\N$ with $1\leq s\leq v$. We have $$\aco{n\in\aco{0,\dots,p^v}\colon p^{s}\nmid\binom{p^v}{n}}=\aco{mp^{v-s+1}\colon 0\leq m\leq p^{s-1}}.$$
\end{lemma}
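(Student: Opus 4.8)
The plan is to reduce the whole statement to a single clean formula for the $p$-adic valuation of the binomial coefficients involved, namely
\[
v_p\!\paren{\binom{p^k}{j}}=k-v_p(j)\qquad\text{for }0<j<p^k,
\]
together with the trivial values $\binom{p^k}{0}=\binom{p^k}{p^k}=1$. Granting this, the lemma is immediate: since $s\geq 1$, the condition $p^s\nmid\binom{p^k}{j}$ means $v_p\binom{p^k}{j}\leq s-1$, and for $0<j<p^k$ this reads $k-v_p(j)\leq s-1$, i.e. $v_p(j)\geq k-s+1$, i.e. $p^{k-s+1}\mid j$; the two boundary indices $j=0$ and $j=p^k$ are themselves multiples of $p^{k-s+1}$ and have valuation $0\leq s-1$. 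Hence the left-hand set is exactly the set of multiples of $p^{k-s+1}$ contained in $\aco{0,\dots,p^k}$, i.e. the numbers $mp^{k-s+1}$ with $0\leq m\leq p^{s-1}$ (the extreme value $m=p^{s-1}$ corresponding to $j=p^k$), which is the right-hand side of the claim.

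To prove the valuation formula I would use Legendre's formula directly, as prepared just above the statement. Writing
\[
v_p\!\paren{\binom{p^k}{j}}=v_p(p^k!)-v_p(j!)-v_p\!\paren{(p^k-j)!}
\]
and substituting $v_p(n!)=\frac{n-s_p(n)}{p-1}$, the additive parts $p^k$, $-j$ and $-(p^k-j)$ cancel and, using $s_p(p^k)=1$, one is left with
\[
v_p\!\paren{\binom{p^k}{j}}=\frac{s_p(j)+s_p(p^k-j)-1}{p-1}.
\]
Everything therefore reduces to evaluating $s_p(j)+s_p(p^k-j)$ for $0<j<p^k$.

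This digit count is the only real step, and I expect it to be the main obstacle. I would carry out the base-$p$ subtraction $p^k-j$ explicitly: letting $t=v_p(j)$, the expansion of $j$ has its $t$ lowest digits zero and a nonzero digit $a_t$ in position $t$, so subtracting from $p^k=(1\,0\cdots0)_p$ triggers a borrow that propagates from position $t$ up to position $k$. The resulting digits are $p-a_t$ in position $t$, then $(p-1)-a_i$ in positions $t+1,\dots,k-1$, and $0$ in position $k$; adding these to the digits of $j$ makes the $a_i$ cancel and yields $s_p(j)+s_p(p^k-j)=p+(k-1-t)(p-1)$. The numerator above is then $(p-1)(k-t)$, so the valuation is $k-t=k-v_p(j)$, as claimed. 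A cleaner alternative that sidesteps the borrow bookkeeping is to use the identity $j\binom{p^k}{j}=p^k\binom{p^k-1}{j-1}$: since every base-$p$ digit of $p^k-1$ equals $p-1$, the coefficient $\binom{p^k-1}{j-1}$ is prime to $p$ (by Kummer's no-carry criterion, or by Legendre's formula applied once more), whence $v_p\binom{p^k}{j}=v_p(p^k)-v_p(j)=k-v_p(j)$ at once.
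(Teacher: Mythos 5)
Your proof is correct and takes essentially the same route as the paper: both reduce the lemma to the valuation formula $v_p\binom{p^k}{j}=k-v_p(j)$ for $0<j<p^k$, proved via Legendre's formula and the base-$p$ digit computation of $s_p(j)+s_p(p^k-j)$ (your shortcut through $j\binom{p^k}{j}=p^k\binom{p^k-1}{j-1}$ is a clean alternative the paper does not use). One remark: the bound $0\leq m\leq p^{s-1}$ you derive is the correct description of the set, whereas the statement's $0\leq m\leq p$ matches it only for $s=2$; this is a typo in the lemma as printed (the later applications use $s=2$ and $s=3$), not a gap in your argument.
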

\begin{proof}
It is clear that $0$ and $p^v$ belong to both sets. Let $0<n<p^v$ and $\sum_{i=0}^{v-1}n_ip^i$ its decomposition in base $p$. We show that $v_p\paren{\binom{p^v}{n}}=v-r$ where $r=\min\aco{i\colon n_i\neq 0}$. Since $0<n<p^v$, we have $0<r<v$.

By Legendre's formula, we have $v_p(p^v!)=\frac{p^v-1}{p-1}$ and $v_p(n!)=\frac{n-s_p(n)}{p-1}$ where $s_p(n)=\sum_i n_i$. We also have $$p^v-n =1+\sum_{i=0}^{v-1}(p-1-n_i)p^i=(p-n_r)p^r+\sum_{i=r+1}^{v-1}(p-1-n_i)p^i.$$
Thus, $$v_p((p^v-n)!)=\frac{p^v-n-\sum_{i=r+1}^{v-1}(p-1-n_i)-(p-n_r)}{p-1},$$
and
\begin{align*}
    v_p(n!(p^v-n)!) &= v_p(n!)+v_p((p^v-n)!) = \frac{1}{p-1}\paren{p^v-\sum_{i=r+1}^{v-1}(p-1)-p}\\
    &= \frac{p^v-(v-r-1)(p-1)-p}{p-1} = \frac{p^v-1-v(p-1)+r(p-1)}{p-1}\\&=\frac{p^v-1}{p-1}+r-v.
\end{align*}
Therefore, we obtain $v_p\paren{\binom{p^v}{n}}=v-r$, hence an integer $0<n<p^v$ belongs to the first set if and only if $v-r<s$, i.e., $r\geq v-s+1$, which happens if and only if $n$ is a multiple of $p^{v-s+1}$. This concludes the proof.
\end{proof}

The proof of the following lemma is due to Darij Grinberg and Victor Reiner ((12.69.3) in \cite{DV}).
\begin{lemma}\label{lem:darij}
Let $n\in\N$ and $p$ be a prime factor of $n$. For all $q\in\N$ and $r\in\Q$ such that $rn/p$ is an integer, we have $$\binom{qn}{rn}\equiv \binom{qn/p}{rn/p}\pmod{p^{v_p(n)}}.$$
\end{lemma}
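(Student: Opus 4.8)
The plan is to prove the congruence by a generating-function argument, reading $\binom{qn}{rn}$ as the coefficient of $X^{rn}$ in $(1+X)^{qn}$ and reducing the exponent $qn$ to $qn/p$ at the level of polynomials modulo $p^{v_p(n)}$. Write $v=v_p(n)$; since $p$ divides $n$ we have $v\geq 1$, and I factor $n=p^v m'$ with $p\nmid m'$, so that $qn=qm'\,p^v$ and $qn/p=qm'\,p^{v-1}$.

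The cornerstone is the polynomial congruence
\[(1+X)^{p^e}\equiv (1+X^p)^{p^{e-1}}\mod p^e\]
in $\ZZ[X]$, valid for every $e\geq 1$, which I would prove by induction on $e$. The base case $e=1$ is exactly the statement that $p$ divides $\binom{p}{j}$ for $0<j<p$, already used in the paper. For the inductive step I raise the congruence for $e-1$ to the $p$-th power: writing $(1+X)^{p^{e-1}}=(1+X^p)^{p^{e-2}}+p^{e-1}G$ for some $G\in\ZZ[X]$ and expanding by the binomial theorem, the leading term is $(1+X^p)^{p^{e-1}}$, while every remaining term carries a factor $\binom{p}{j}p^{(e-1)j}$ with $j\geq 1$; checking that each such term is divisible by $p^e$ (using $p\mid\binom{p}{j}$ for $1\leq j\leq p-1$, and $(e-1)p\geq e$ for the term $j=p$) closes the induction.

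With this in hand I specialize $e=v$ and raise to the $qm'$-th power. Since a congruence modulo $p^v$ in $\ZZ[X]$ is preserved under taking powers, I obtain
\[(1+X)^{qn}=\paren{(1+X)^{p^v}}^{qm'}\equiv\paren{(1+X^p)^{p^{v-1}}}^{qm'}=(1+X^p)^{qn/p}\mod p^v.\]
Extracting the coefficient of $X^{rn}$ on both sides finishes the proof: the left-hand side yields $\binom{qn}{rn}$, while on the right only exponents divisible by $p$ occur, so the coefficient of $X^{rn}$ is $\binom{qn/p}{rn/p}$ when $p\mid rn$ and $0$ otherwise. Under the usual convention that $\binom{N}{k}=0$ whenever $k$ is not an integer in $\aco{0,\dots,N}$, both cases are summarized by the claimed identity $\binom{qn}{rn}\equiv\binom{qn/p}{rn/p}\mod p^v$; note also that $qn/p$ is an integer because $p\mid n$.

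I expect the only delicate point to be the valuation bookkeeping in the inductive step, namely verifying that every cross term in the binomial expansion of $\paren{(1+X^p)^{p^{e-2}}+p^{e-1}G}^p$ is divisible by $p^e$; the term $j=p$ requires $(e-1)p\geq e$, which holds precisely because $e\geq 2$ in the inductive step. Everything else reduces to routine coefficient extraction.
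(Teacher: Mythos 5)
Your proof is correct, but it takes a genuinely different route from the one in the paper. The paper's argument (following Grinberg and Reiner) is purely combinatorial: it double-counts the $(rn)$-element subsets of $\ZZ_{qn}$, identifies the $\binom{qn/p}{rn/p}$ subsets invariant under the shift $i\mapsto i+qn/p$, and shows that the remaining subsets fall into orbits of size exactly $p^{v_p(n)}$ under the shift $i\mapsto i+qn/p^{v_p(n)}$, so their total count vanishes modulo $p^{v_p(n)}$. You instead prove the polynomial congruence $(1+X)^{p^e}\equiv(1+X^p)^{p^{e-1}}\mod p^e$ by induction on $e$, raise it to the $qm'$-th power, and extract the coefficient of $X^{rn}$; your valuation bookkeeping in the inductive step is sound (the cross terms for $1\leq j\leq p-1$ have valuation at least $1+(e-1)j\geq e$, and the term $j=p$ has valuation $(e-1)p\geq e$ since $e\geq 2$ there). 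Each approach has its merits: the combinatorial proof is shorter and avoids any convention issues by never forming $\binom{qn/p}{rn/p}$ with a non-integral lower entry in a formal way, whereas your generating-function proof makes the degenerate case $p\nmid rn$ completely transparent (the coefficient on the right-hand side is simply $0$) and isolates a reusable congruence in $\ZZ[X]$ that in fact encodes the full family of congruences $\binom{p^e}{j}\equiv\binom{p^{e-1}}{j/p}\mod p^e$ at once. Both arguments rely only on $p\mid\binom{p}{j}$ for $0<j<p$, which the paper already uses elsewhere, so your proof fits the paper's elementary spirit.
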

\begin{proof}
The argument consists in counting the $(rn)$-elements subsets of the set $\Z_{qn}$. It is clearly $\binom{qn}{rn}$.

At the same time, the subsets fall into two classes:
\begin{enumerate}
    \item The subsets which are invariant under the permutation $i\mapsto i+qn/p$ of $\Z_{qn}$.
    \item The other ones.
\end{enumerate}
Say there are $N_1$ and $N_2$ subsets in the first and second class, respectively.

If a $(rn)$-elements subset $S$ belongs to the first class, then the intersection $S\cap\{0,1,\dots,qn/p-1\}$ must have $rn/p$ elements, which uniquely determine all of $S$ by iterating the permutation given above. Thus the first class contains $N_1=\binom{qn/p}{rn/p}$ elements.

Besides, the permutation $\phi:i\mapsto i+qn/p^{v_p(n)}$ of $\Z_{qn}$ acts on the subsets of the second class, splitting them into orbits. Its $p^{v_p(n)}$-power acts trivially on the subsets of the second class. Then, the size of each orbit divides $p^{v_p(n)}$. Suppose that the size $\abs{\mathcal{O}}$ of an orbit $\mathcal{O}$ is a proper divisor of $p^{v_p(n)}$. Then $\abs{\mathcal{O}}$ divides $p^{v_p(n)-1}$, so $\phi^{p^{v_p(n)-1}}$ acts trivially on $\mathcal{O}$, hence elements of this orbit are subsets of the first class, a contradiction. Then every orbit has size $p^{v_p(n)}$.

Since the set of all second class subsets is the union of these orbit, it has size $N_2$ divisible by $p^{v_p(n)}$.

Therefore, we have $\binom{qn}{rn}=N_1+N_2\equiv\binom{qn/p}{rn/p}\pmod{p^{v_p(n)}}$ and the proof is complete.
\end{proof}

To prove the following lemma, we shall introduce \emph{Babbage's theorem} (Theorem 1.12 in \cite{DG}). It states that for any prime $p$ and integers $a,b\geq 0$, we have $\binom{ap}{bp}\equiv\binom{a}{b}\pmod{p^2}$. Note that if $p\geq 5$, we can replace $\pmod{p^2}$ by $\pmod{p^3}$, thus strengthening the result. This case is known as \emph{Wolstenholme's theorem}.

\begin{lemma}\label{lem:babbage}
If $p$ is a prime, $v\geq 1$ and $0\leq j\leq p$, then $$\binom{p^v}{jp^{v-1}}\equiv\binom{p}{j}\pmod{p^2}$$
\end{lemma}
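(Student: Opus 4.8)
The plan is to prove the statement $\binom{p^k}{mp^{k-1}}\equiv\binom{p}{m}\mod p^2$ by induction on $k$, using Babbage's theorem as the engine that lowers each congruence by one power of $p$ in the top and bottom entries. The base case $k=1$ is trivial, since it reads $\binom{p}{m}\equiv\binom{p}{m}\mod p^2$. So the real content is the inductive step.

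For the inductive step, suppose the claim holds for some $k\geq 1$, and consider $\binom{p^{k+1}}{mp^k}$. The key observation is that this binomial coefficient has the form $\binom{ap}{bp}$ with $a=p^k$ and $b=mp^{k-1}$, so Babbage's theorem applies directly and gives
\[\binom{p^{k+1}}{mp^k}=\binom{p^k\cdot p}{(mp^{k-1})\cdot p}\equiv\binom{p^k}{mp^{k-1}}\mod p^2.\]
Then the induction hypothesis yields $\binom{p^k}{mp^{k-1}}\equiv\binom{p}{m}\mod p^2$, and chaining the two congruences completes the step. The condition $0\leq m\leq p$ is preserved throughout because $m$ is held fixed across the induction, so the entries $mp^{k-1}$ and $mp^k$ remain valid arguments of the binomial coefficients.

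I expect there to be essentially no obstacle here, as the statement is a clean iteration of Babbage's theorem; the only point requiring a little care is correctly matching the entries to the form $\binom{ap}{bp}$ at each stage, namely recognizing that dividing top and bottom of $\binom{p^{k+1}}{mp^k}$ by $p$ produces exactly $\binom{p^k}{mp^{k-1}}$, which is the previous case. One could alternatively invoke Lemma \ref{lem:darij} with $n=p^k$, $q=p$, and $r=m/p$ (noting $rn=mp^{k-1}$ is an integer and $v_p(p^k)=k$), which would immediately give the congruence modulo $p^k$ and hence modulo $p^2$ for $k\geq 2$; but the Babbage induction is the cleaner route and handles all $k\geq 1$ uniformly, so I would present that one.
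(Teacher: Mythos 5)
Your induction on $k$ via Babbage's theorem is correct and is exactly the route the paper takes (the paper's proof is a one-line appeal to ``Babbage's theorem by induction, or lemma \ref{lem:darij}''). One small caveat on your side remark: with $q=p$ and $n=p^k$ in Lemma \ref{lem:darij} you would get $\binom{p^{k+1}}{mp^{k-1}}$, not $\binom{p^k}{mp^{k-1}}$; the correct instantiation is $q=1$, $n=p^k$, $rn=mp^{k-1}$, iterated down to $\binom{p^2}{mp}\equiv\binom{p}{m}\bmod p^2$ --- but since you present the Babbage induction as the actual proof, this does not affect its validity.
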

\begin{proof}
It follows directly from Babbage's theorem by induction.
\end{proof}

\begin{lemma}\label{lem:wolstenholme}
If $p$ is a prime, $v\geq 2$ and $0\leq j\leq p^2$, then $$\binom{p^v}{jp^{v-2}}\equiv\binom{p^2}{j}\pmod{p^3}$$
\end{lemma}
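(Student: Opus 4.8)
The plan is to mirror the proof of Lemma \ref{lem:babbage}, but now pushing one prime-power deeper so that the modulus becomes $p^3$ instead of $p^2$. The key tool is Lemma \ref{lem:darij}, which lets us collapse a binomial coefficient modulo $p^{v_p(n)}$ by dividing both entries by $p$. First I would set $n=p^k$, so that $v_p(n)=k$. Writing the target binomial coefficient $\binom{p^k}{mp^{k-2}}$ in the form $\binom{qn}{rn}$ demanded by Lemma \ref{lem:darij} requires $q=1$ and $r=mp^{-2}$; then $rn=mp^{k-2}$ is indeed an integer whenever $k\geq 2$, so the hypotheses of the lemma are met.

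Next I would apply Lemma \ref{lem:darij} repeatedly to descend. A single application gives $\binom{p^k}{mp^{k-2}}\equiv\binom{p^{k-1}}{mp^{k-3}}\mod p^k$, and since $k\geq 3$ at each such stage (as long as we keep the remaining exponent at least $2$), the congruence in particular holds $\mod p^3$. Iterating this descent $k-2$ times brings us down to $\binom{p^2}{m}$, and at every step the modulus $p^{v_p(\cdot)}$ is at least $p^3$, so all the intermediate congruences are valid modulo $p^3$ and chain together by transitivity. The care needed is to check that $v_p$ of the relevant $n$ never drops below $3$ during the descent until we reach the final step where the two sides already coincide.

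The main obstacle — really a bookkeeping subtlety — is the very last application of Lemma \ref{lem:darij}, namely the step from $\binom{p^3}{mp}$ down to $\binom{p^2}{m}$. Here $n=p^3$ has $v_p(n)=3$, so Lemma \ref{lem:darij} directly yields $\binom{p^3}{mp}\equiv\binom{p^2}{m}\mod p^3$, which is exactly the modulus we want; so in fact this final step is the one that is tight and supplies the $p^3$ in the statement, while all earlier steps had slack. I would therefore phrase the induction on $k$ starting from the base case $k=2$ (where the statement is the identity $\binom{p^2}{m}\equiv\binom{p^2}{m}$) and use Lemma \ref{lem:darij} once in the inductive step, observing that the modulus stays $\geq p^3$ throughout.

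A clean alternative, which I would present as the actual write-up, is a direct one-line induction: assume $\binom{p^{k}}{mp^{k-2}}\equiv\binom{p^2}{m}\mod p^3$; then for $k+1$, Lemma \ref{lem:darij} with $n=p^{k+1}$ gives $\binom{p^{k+1}}{mp^{k-1}}\equiv\binom{p^k}{mp^{k-2}}\mod p^{k+1}$, and since $k+1\geq 3$ this holds $\mod p^3$, so combining with the inductive hypothesis finishes the step. This is essentially the remark already made in Lemma \ref{lem:babbage} that the result "follows from Lemma \ref{lem:darij}," transposed from $p^2$ to $p^3$, so the proof should be short and the only thing to verify carefully is the divisibility condition ensuring $rn\in\ZZ$ at each stage.
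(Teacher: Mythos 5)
Your proof is correct. The inductive step is sound: with $n=p^{k+1}$, $q=1$, $r=mp^{-2}$, the quantity $rn=mp^{k-1}$ and its quotient $rn/p=mp^{k-2}$ are both integers precisely because $k\geq 2$, and Lemma \ref{lem:darij} then gives a congruence modulo $p^{v_p(n)}=p^{k+1}$, which is at least $p^3$ since $k+1\geq 3$; chaining with the inductive hypothesis (base case $k=2$ being trivial) finishes the argument. Where you differ from the paper: the paper splits into cases, invoking Wolstenholme's theorem (the strengthening $\binom{ap}{bp}\equiv\binom{a}{b}\mod p^3$, valid only for $p\geq 5$) and reserving Lemma \ref{lem:darij} for $p=2$ and $p=3$, whereas you run the Lemma \ref{lem:darij} argument uniformly for all primes. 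Your route is arguably preferable here: it eliminates the case distinction and the dependence on Wolstenholme's theorem altogether, at the cost of nothing, since the exponent $v_p(p^{k+1})=k+1\geq 3$ already supplies the needed modulus. The one point worth stating explicitly in a final write-up is the integrality check on $rn/p$ at each application, which you do flag; this is exactly the place where the hypothesis $k\geq 2$ is used.
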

\begin{proof}
For $p\geq 5$, the proof follows from Wolstenholme's theorem by induction.

Now we consider the cases $p=2$ and $p=3$. If $v=2$, it is direct. If $v>2$, then Lemma \ref{lem:darij} with $q=1$, $n=p^v$ (then $v_p(n)=v$) and $r=j/p^2$ gives $$\binom{p^v}{jp^{v-2}}=\binom{qn}{rn}\equiv \binom{qn/p}{rn/p}\equiv \binom{p^{v-1}}{jp^{v-3}}\pmod{p^v}$$ and the congruence also holds modulo $p^3$ since $v\geq 3$. Then the proof proceeds by induction.
\end{proof}

Now we can apply these lemmas in order to prove the following proposition, which gives us base cases to apply Theorem \ref{thm:induction}.

\begin{proposition}\label{prop:base_case}
Let $p$ be a prime and $n\in\N$ with $n=p^vn'$, where $v=v_p(n)$. Then we have
\begin{enumerate}
    \item $P(p,n)=p^vP(p,n')$. If $p=2$, it holds only if $n'\neq 1$.
    \item If $p>2$ is a non-Wieferich prime, then $P(p^2,n)=p^vP(p^2,n')$.
    \item If $p>2$ is a non-Wieferich prime, then $P(p^3,n)=p^vP(p^3,n')$.
\end{enumerate}
\end{proposition}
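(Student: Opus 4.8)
The plan is to prove all three statements uniformly by reducing the length $n$ one factor of $p$ at a time and iterating. Writing $n=p\,n''$ with $n''=n/p$, I would show that for $p\mid n$ one has $P(p^t,n)=p\,P(p^t,n'')$ (unconditionally for $t=1$, except in the case $p=2$, $n=2$; and under the non-Wieferich hypothesis for $t=2,3$). Applying this $k$ times collapses $n$ down to $n'$, and combined with Proposition \ref{prop:period_p^2}, which evaluates $P(p^t,n')$ when $p\nmid n'$, it yields the three formulas. The device for comparing length $n$ with length $n''$ is the decomposition of $\{0,\dots,n-1\}$ into the $p$ residue classes modulo $p$: each class has $n''$ elements, $H^{p}$ permutes each class cyclically, and on each class the operator $I+H^{p}$ acts exactly as the map $T$ of length $n''$.

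This block decomposition is powered by the binomial lemmas. For $t=1$, Lemma \ref{lem:I+H} gives $T^{p}\equiv I+H^{p}\bmod p$ outright. For $t=2,3$ I would start from Proposition \ref{prop:dev}, apply Lemma \ref{lem:Leg_formula} (with $s=t$) to $T^{p^{t}}$ to kill every term $\binom{p^{t}}{j}\textbf{a}_{i+j}$ whose index $j$ is not a multiple of $p$, and rewrite the surviving coefficients using Lemma \ref{lem:babbage} (for $t=2$) or Lemma \ref{lem:wolstenholme} (for $t=3$). This identifies $T^{p^{t}}\bmod p^{t}$ with a block-diagonal operator that, on each residue class, is a fixed polynomial in the length-$n''$ shift (namely $\paren{I+H^{p}}^{p}$ when $t=2$). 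Comparing the order of $T$ on $\CC_{p^t}^{n}$ with its order on $\CC_{p^t}^{n''}$ — both equal to the relevant period by Proposition \ref{prop:basic} — gives two facts: the parts of $P(p^t,n)$ and $P(p^t,n'')$ coprime to $p$ coincide, and the two $p$-adic valuations satisfy a relation that forces $v_p\paren{P(p^t,n)}=v_p\paren{P(p^t,n'')}+1$ automatically whenever $v_p\paren{P(p^t,n'')}$ is already large enough, i.e. as soon as $n''$ is itself divisible by $p$. Hence the difficulty localizes entirely to the \emph{first} step, the passage from $n'$ to $p\,n'$ with $p\nmid n'$: everything reduces to the base identity $P(p^t,p\,n')=p\,P(p^t,n')$.

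For $t=1$ this base step is clean. Since $p\mid pn'$ we have $x^{pn'}-1=\paren{x^{n'}-1}^{p}$ over $\ZZ_p$, so the shift $H$ of length $pn'$ has minimal polynomial a nontrivial $p$-th power; consequently $T=I+H$ has repeated irreducible factors and is not semisimple on the cyclic subspace $\CC_p^{pn'}$, so its order there is divisible by $p$. The cyclic subspace fails to be nonzero only when every $n'$-th root of unity equals $-1$, that is, when $p=2$ and $n'=1$; this is exactly the excluded case $n=2$, where $P(2,2)=1$ by Theorem \ref{thm:power_2} while $2\,P(2,1)=2$.

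For $t=2,3$ the base step is the heart of the matter, and I expect it to be the main obstacle. By Theorem \ref{thm:induction}(1) we already know $P(p^t,pn')\in\aco{P(p^{t-1},pn'),\,p\,P(p^{t-1},pn')}$ (with $P(p^{t-1},pn')$ supplied by the already-proved lower case), so it suffices to exclude the smaller value, i.e. to show $T^{P(p^{t-1},pn')}\not\equiv I\bmod p^{t}$ on $\CC_{p^t}^{pn'}$. Expressing this iterate through the block operator, its defect modulo $p^{t}$ splits into a genuine term — nonzero by Proposition \ref{prop:period_p^2} together with the non-Wieferich hypothesis, which is precisely what makes the coprime-length valuation jump — plus a lower-order correction arising from the middle binomial coefficients, governed by Lemmas \ref{lem:babbage} and \ref{lem:wolstenholme}. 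The delicate and least routine point is to prove that these two contributions do not cancel modulo $p^{t}$, so that the defect is genuinely nonzero; the conditions $p>2$ and $p$ non-Wieferich (together with the sharper congruences available for $p\ge 5$ through Wolstenholme) are exactly what should rule out the cancellation.
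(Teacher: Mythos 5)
Your reduction is structurally the same as the paper's. The paper also identifies $T^{p^t}\mod p^t$ with a block operator on residue classes --- packaged there as the extraction maps $S_r$ --- using exactly Lemmas \ref{lem:Leg_formula}, \ref{lem:babbage} and \ref{lem:wolstenholme}, and then compares orders on the cycle sets; it merely performs the reduction in two jumps ($n'\to p^{t-1}n'$, then $p^{t-1}n'\to n$) rather than one factor of $p$ at a time. Your observation that the valuation jump is automatic at every intermediate step (because $v_p$ of the shorter period is already positive there), and your minimal-polynomial argument for the $t=1$ base step (the factorization $x^{pn'}-1=\paren{x^{n'}-1}^p$ over $\ZZ_p$ forces $p\mid P(p,pn')$, with the degenerate case giving exactly the excluded powers of $2$), are both correct; the latter is a genuinely different and arguably cleaner route to part (1) than the paper's.

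The genuine gap is exactly where you flag it: the base step $P(p^t,pn')=p\,P(p^t,n')$ for $t=2,3$ and $p\nmid n'$ is not proved. Your proposed route --- expand $T^{P(p^{t-1},pn')}$ and show that a ``genuine term'' and a ``correction from the middle binomial coefficients'' fail to cancel mod $p^t$ --- is only a hope as written, and it is not the mechanism the paper uses. The paper stays inside the block picture: for $\textbf{a}\in\ZZ_{p^2}^{pn'}$ one has $S_0(T^{p^2}\textbf{a})\equiv T^pS_0(\textbf{a})\mod p^2$, and since the basic tuple is supported on the extracted indices, the subsequence $\paren{T^{rp^2}\textbf{e}}_{r\geq 0}$ evolves exactly as $\paren{T^{rp}\textbf{e}'}_{r\geq 0}$ in $\ZZ_{p^2}^{n'}$. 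The non-Wieferich hypothesis then enters \emph{arithmetically}, through the order of $2$ rather than through a binomial cancellation: $O(p^2)=pO(p)$ gives $p\mid P(p^2,n')$ (propositions \ref{prop:div_order} and \ref{prop:order_wief}, sharpened to $v_p\paren{P(p^2,n')}=1$ by proposition \ref{prop:period_p^2}), so the subsampled period drops to $P(p^2,n')/p$ and the full period is recovered as $p^2\cdot P(p^2,n')/p=pP(p^2,n')$; the $p^3$ case is identical with $O(p^3)=p^2O(p)$. Note that the order comparison alone only yields $P(p^2,pn')\in\aco{pP(p,n'),\,p^2P(p,n')}$, i.e.\ it leaves $v_p\paren{P(p^2,pn')}$ undetermined between $1$ and $2$, and nothing in your sketch excludes the smaller value. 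So you must either import this order-of-$2$ input or actually prove the non-cancellation you allude to; as it stands the proof is incomplete precisely at the point you yourself identify as the heart of the matter.
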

\begin{proof}
The idea of this proof is to study the behavior of a T-sequence $(T^i\textbf{a})_{i\in\N}$ of $\Z_m^n$ (where $m=p,p^2,p^3$, respectively) by studying the behavior of the T-sequence generated by a \emph{subtuple} of $\textbf{a}$, which is a T-sequence of smaller tuples that we better understand.

For that purpose, we introduce a family of functions $S_r$, $0\leq r\leq v$, that extract an interesting subtuple from a given tuple. For $0\leq r\leq v$, let $$S_r:\Z_m^n\rightarrow\Z_m^{p^rn'}:(a_0,\dots,a_{n-1})\mapsto (a_0,a_{p^{v-r}},a_{2p^{v-r}},\dots,a_{(p^rn'-1)p^{v-r}}).$$
\medskip

\paragraph{(1)} Here we use $S_0$. For $\textbf{a}\in\Z_p^n$, the subtuple $S_0(\textbf{a})$ is in $\Z_p^{n'}$.

By Lemma \ref{lem:I+H}, we have $T^{p^v}\equiv I+H^{p^v}\pmod{p}$, hence $S_0(T^{p^v}\textbf{a})\equiv TS_0(\textbf{a})\pmod{p}$ for any tuple $\textbf{a}\in\Z_p^n$. Considering the basic tuple $\textbf{e}$ of $\Z_p^n$, it gives $S_0(\textbf{e})=\textbf{e'}$ where $\textbf{e'}$ is the basic tuple of $\Z_p^{n'}$. Note that components of $\textbf{e}$ that are not components of $\textbf{e'}$ remain zero after any number of iterations of $T^{p^v}$, hence the behavior of the T-sequence $(T^{rp^v}\textbf{e})_{r\geq 0}$ is entirely determined by the behavior of $(T^r\textbf{e'})_{r\geq 0}$. Since the cycle length of the latter is $P(p,n')$, the cycle length of $(T^r\textbf{e})_{r\geq 0}$ is $p^vP(p,n')$.

If $p=2$, note that this argument only holds if the T-sequences do not vanish. This explains the additional condition $n'\neq 1$ in this case.

\medskip
\paragraph{(2)} Here we suppose $p>2$ is a non-Wieferich prime. We first show that $P(p^2,pn')=pP(p^2,n')$ and then that $P(p^2,n)=p^{v-1}P(p^2,pn')$. 

To prove the first part, we use $S_0:\Z_{p^2}^{pn'}\rightarrow\Z_{p^2}^{n'}$. By Lemmas \ref{lem:Leg_formula} and \ref{lem:babbage}, we have
\begin{align*}
    [T^{p^2}\textbf{a}]_i\equiv\sum_{j=0}^{p^2} \binom{p^2}{j} \textbf{a}_{i+j}\equiv\sum_{j=0}^{p} \binom{p^2}{jp} \textbf{a}_{i+jp} \equiv \sum_{j=0}^{p} \binom{p}{j} \textbf{a}_{i+jp}\pmod{p^2},
\end{align*}
which implies that $S_0(T^{p^2}\textbf{a})\equiv T^pS_0(\textbf{a})\pmod{p^2}$. Since $p$ is an odd non-Wieferich prime, $p$ divides $P(p^2,n')$ by Proposition $\ref{prop:period_p^2}$. Thus we obtain $P(p^2,pn')=p^2p^{-1}P(p^2,n')=pP(p^2,n')$.

We now use $S_1$. For $\textbf{a}\in\Z_p^n$, the subtuple $S_1(\textbf{a})$ is in $\Z_{p^2}^{pn'}$. We also have
\begin{align*}
    [T^{p^v}\textbf{a}]_i&\equiv\sum_{j=0}^{p^v} \binom{p^v}{j} \textbf{a}_{i+j}\equiv\sum_{j=0}^{p} \binom{p^v}{jp^{v-1}} \textbf{a}_{i+jp^{v-1}} \equiv \sum_{j=0}^{p} \binom{p}{j} \textbf{a}_{i+jp^{v-1}}\pmod{p^2},
\end{align*}
where the second and third equalities follow from Lemmas \ref{lem:Leg_formula} and \ref{lem:babbage}, respectively. Thus, $S_1(T^{p^v}\textbf{a})\equiv T^pS_1(\textbf{a})\pmod{p^2}$. Therefore we obtain $P(p^2,n)=p^vp^{-1}P(p^2,pn')=p^{v-1}P(p^2,pn')=p^vP(p^2,n')$ as desired, where the last equality follows from the first part. 

\medskip
\paragraph{(3)} Suppose $p>2$ is a non-Wieferich prime. To begin with, we suppose $v>1$. We first show that $P(p^3,p^2n')=p^2P(p^3,n')$ and then that $P(p^3,n)=p^{v-2}P(p^3,p^2n')$.

First, we use $S_0$. For $\textbf{a}\in\Z_{p^3}^{p^2n'}$, the subtuple $S_0(\textbf{a})$ is in $\Z_{p^3}^{n'}$. By Lemmas \ref{lem:Leg_formula} and \ref{lem:wolstenholme}, we have
\begin{align*}
    [T^{p^4}\textbf{a}]_i&\equiv\sum_{j=0}^{p^4} \binom{p^4}{j} \textbf{a}_{i+j}\equiv\sum_{j=0}^{p^2} \binom{p^4}{jp^2} \textbf{a}_{i+jp^2} \equiv \sum_{j=0}^{p^2} \binom{p^2}{j} \textbf{a}_{i+jp^2}\pmod{p^3},
\end{align*}
hence $S_0(T^{p^4}\textbf{a})\equiv T^{p^2}S_0(\textbf{a})\pmod{p^3}$. By Proposition \ref{prop:period_p^2} (it is why we consider $p>2$ non-Wieferich), $p^2$ divides $P(p^3,n')$, thus we obtain $P(p^3,p^2n')=p^4p^{-2}P(p^3,n')=p^2P(p^3,n')$.

Now we use $S_2$. For $\textbf{a}\in\Z_p^n$, the subtuple $S_2(\textbf{a})$ is in $\Z_p^{p^2n'}$.
First, using Lemmas \ref{lem:Leg_formula} and \ref{lem:wolstenholme}, we obtain
\begin{align*}
    [T^{p^v}\textbf{a}]_i&\equiv\sum_{j=0}^{p^v} \binom{p^v}{j} \textbf{a}_{i+j}\equiv\sum_{j=0}^{p^2} \binom{p^v}{jp^{v-2}} \textbf{a}_{i+jp^{v-2}} \equiv \sum_{j=0}^{p^2} \binom{p^2}{j} \textbf{a}_{i+jp^{v-2}}\pmod{p^3},
\end{align*}
hence $S_2(T^{p^v}\textbf{a})\equiv T^{p^2}S_2(\textbf{a})\pmod{p^3}$. Together with the first part, since $p^2$ divides $P(p^3,p^2n')$ (by Proposition \ref{prop:period_p^2}), we get $P(p^3,n)=p^{v}p^{-2}P(p^3,p^2n')=p^vP(p^3,n')$ as desired.

To conclude the proof, we consider the case $v=1$. We have to show that $P(p^3,pn')=pP(p^3,n')$. We use $S_0$. For $\textbf{a}\in\Z_{p^3}^{pn'}$, the subtuple $S_0(\textbf{a})$ is in $\Z_{p^3}^{n'}$. As above, using Lemmas \ref{lem:Leg_formula} and \ref{lem:wolstenholme}, we have $[T^{p^3}\textbf{a}]_i\equiv \sum_{j=0}^{p^2} \binom{p^2}{j} \textbf{a}_{i+jp}\pmod{p^3}$, hence $S_0(T^{p^3}\textbf{a})\equiv T^{p^2}S_0(\textbf{a})\pmod{p^3}$. By Proposition \ref{prop:period_p^2}, $p^2$ divides $P(p^3,n')$. Thus we obtain $P(p^3,pn')=p^3p^{-2}P(p^3,n')=pP(p^3,n')$, which concludes.
\end{proof}

\begin{theorem}\label{thm:power_primes}
If $p>2$ is a non-Wieferich prime, we have $P(p^k,n)=p^{k-1}P(p,n)$ for all positive integers $k$ and $n$.
\end{theorem}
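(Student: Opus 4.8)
The plan is to assemble three results already established in Section \ref{sec:prime_powers}: Proposition \ref{prop:base_case} reduces the computation of $P(p^j,n)$ to the case where $p\nmid n$; Proposition \ref{prop:period_p^2} settles that coprime case for the exponents $j=2,3$; and Theorem \ref{thm:induction} then bootstraps the result to all exponents. The genuinely hard analytic work (the binomial congruences coming from Legendre's, Babbage's and Wolstenholme's theorems) has already been absorbed into Proposition \ref{prop:base_case}, so what remains is essentially bookkeeping.

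First I would fix $n$ and write $n=p^vn'$ with $v=v_p(n)$, so that $p\nmid n'$. Applying parts (1), (2) and (3) of Proposition \ref{prop:base_case} gives
$$P(p,n)=p^vP(p,n'),\qquad P(p^2,n)=p^vP(p^2,n'),\qquad P(p^3,n)=p^vP(p^3,n').$$
Since $p\geq 3$ is non-Wieferich and $p\nmid n'$, Proposition \ref{prop:period_p^2} applies to $n'$ and yields $P(p^2,n')=pP(p,n')$ together with $P(p^3,n')=p^2P(p,n')$. Substituting these into the displayed equalities and pulling out the common factor $p^v$ produces
$$P(p^2,n)=p\paren{p^vP(p,n')}=pP(p,n),\qquad P(p^3,n)=p^2P(p,n),$$
now valid for \emph{every} $n\in\NN$, not only for $n$ coprime to $p$. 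This is the key upgrade: the two-variable reduction of Proposition \ref{prop:base_case} removes the coprimality restriction that Proposition \ref{prop:period_p^2} carried.

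Finally I would feed these two identities into Theorem \ref{thm:induction}. Together they give $P(p^3,n)=p\,P(p^2,n)$, which is exactly the hypothesis of part (3) of that theorem with $N=2$ (and $N=2\geq 2$ is legitimate). Hence $P(p^{2+k},n)=p^kP(p^2,n)=p^{k+1}P(p,n)$ for all $k\in\NN$. Combining this with the trivial case $k=1$ and with the case $k=2$ computed above shows $P(p^k,n)=p^{k-1}P(p,n)$ for every $k\in\NN$, as claimed.

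I do not expect any real obstacle in this final step, precisely because the difficulty is concentrated upstream. The only care needed is in checking that the hypotheses of each invoked result are in force throughout ($p>2$ and non-Wieferich everywhere, $p\nmid n'$ for Proposition \ref{prop:period_p^2}, and $N=2$ for Theorem \ref{thm:induction}(3)) and that the general formula $p^{k-1}P(p,n)$ matches the endpoints $k=1$ and $k=2$ that are handled by hand. Once the removal of the coprimality assumption is in place via Proposition \ref{prop:base_case}, the statement reduces to this routine induction.
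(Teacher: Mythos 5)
Your proposal is correct and follows essentially the same route as the paper's own proof: reduce to the coprime case via Proposition \ref{prop:base_case}, settle the exponents $2$ and $3$ via Proposition \ref{prop:period_p^2}, and bootstrap with Theorem \ref{thm:induction}(3). You merely spell out the final induction step (the choice $N=2$) more explicitly than the paper does.
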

\begin{proof}
Let $k,n\in\N$. Write $n=p^vn'$ where $v=v_p(n)$. We show that (1) $P(p^2,n)=pP(p,n)$ and (2) $P(p^3,n)=p^2P(p,n)$.

\medskip
\paragraph{(1)} Points (1) and (2) of Proposition \ref{prop:base_case} yield $P(p,n)=p^vP(p,n')$ and $P(p^2,n)=p^vP(p^2,n')$, respectively. Since $p>2$ is non-Wieferich and coprime with $n'$, we have $P(p^2,n')=pP(p,n')$ by Proposition \ref{prop:period_p^2}. The conclusion follows directly.

\medskip
\paragraph{(2)} The argument is the same, using (3) of Proposition \ref{prop:base_case} instead of (2).

Therefore, we complete the proof by Theorem \ref{thm:induction}.
\end{proof}

We are now able to prove the main result of this paper.

\begin{theorem}
Let $m,n\in\N$ with $m=p_1^{k_1}\dots p_t^{k_t}$ the prime factorization of $m$. If $p_1,\dots,p_t$ are odd and non-Wieferich, then $$P(m,n)=\lcm\paren{p_1^{k_1-1}P(p_1,n),\dots ,p_t^{k_t-1}P(p_t,n)}.$$
\end{theorem}
\begin{proof}
It follows from Theorems \ref{thm:multiplicity} and \ref{thm:power_primes}.
\end{proof}

At this point, a question one may ask is whether we can generalize this result for $p=2$ and Wieferich primes. The proofs above are considerably dependant on Proposition \ref{prop:period_p^2}, which itself is dependant on Proposition \ref{prop:order_wief}. Therefore, it would not be possible to use the same method to obtain similar results for these special primes.

However, F. Breuer shows in Theorem 8.2 of \cite{Br2} that a variant of Theorem \ref{thm:power_primes} holds for $2$ and Wieferich primes, namely that $P(p^k,n)=p^{\max(0,k-1-t)}P(p,n)$ for some integer $t$. That is, these special cases eventually behave as we would expect. Whether it is possible or not to derive such results with an elementary method, similar to the ones used here, remains an open question.

\section{Characterization of tuples in a cycle}\label{sec:characterization_cycles}

In this section we try to characterize tuples of $\Z_m^n$ that belong to $\mathcal{C}_m^n$. By theorem \ref{thm:power_2}, we already know that $\mathcal{C}_m^n=\{\textbf{0}\}$ if $m$ and $n$ are powers of $2$.

The linear map $T$ is represented in the standard basis by the matrix \[\begin{pmatrix}
1&1&0&\dots&0\\
0&1&1&\dots&0\\
&\vdots&&\ddots&\vdots\\
1&0&0&\dots&1
\end{pmatrix}.\]
Note that $\det(T)=0$ when $n$ is even and $\det(T)=2$ when $n$ is odd. This simple fact yields the following proposition.
\begin{proposition}\label{prop:cycles_nodd}
Let $m>2$ and $n>0$ be two odd integers. Then $\mathcal{C}_m^n=\Z_m^n$.
\end{proposition}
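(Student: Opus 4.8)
The plan is to leverage the determinant computation stated immediately before the proposition: since $n$ is odd, $\det(T)=2$. The strategy is to upgrade this to the assertion that $T$ is an \emph{invertible} linear map over the ring $\ZZ_m$, and then to observe that an invertible self-map of a finite set places every element on a cycle, which is exactly what $\CC_m^n=\ZZ_m^n$ says (the inclusion $\CC_m^n\subseteq\ZZ_m^n$ being trivial, so the content is the reverse one).

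First I would record that, because $m$ is odd, $2$ is coprime to $m$ and hence a unit in $\ZZ_m$; therefore $\det(T)=2$ is a unit in $\ZZ_m$. The key algebraic input is the standard fact that a square matrix over a commutative ring is invertible precisely when its determinant is a unit, with inverse $\det(T)^{-1}$ times the adjugate matrix (Cramer's rule is valid over any commutative ring, since $T\cdot\mathrm{adj}(T)=\det(T)\,I$). Applying this with $R=\ZZ_m$ shows that $T$ is invertible as an endomorphism of $\ZZ_m^n$. This is the only place the hypothesis that $m$ is odd is used, and the hypothesis $m>2$ merely discards the degenerate case $m=1$.

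Next I would conclude from invertibility. Since $\ZZ_m^n$ is finite, the bijection $T$ is a permutation of it and hence has finite order, say $d$, in the group $GL_n(\ZZ_m)$; equivalently $T^d=I$. Then $T^d\textbf{a}=\textbf{a}$ for every $\textbf{a}\in\ZZ_m^n$, so each tuple has pre-period $0$ and lies on a cycle, giving $\CC_m^n=\ZZ_m^n$. (Alternatively one argues directly: injectivity of $T$ forces the orbit $\textbf{a},T\textbf{a},T^2\textbf{a},\dots$ to first return to $\textbf{a}$ itself rather than to any later element.) There is no genuine obstacle in this argument; the only point meriting a line of care is the justification that a unit determinant over the non-field ring $\ZZ_m$ already guarantees invertibility, which is precisely where oddness of $m$ enters.
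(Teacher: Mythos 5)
Your proof is correct and follows essentially the same route as the paper: $\det(T)=2$ is a unit in $\ZZ_m$ because $m$ is odd, hence $T$ is an invertible (bijective) map on the finite set $\ZZ_m^n$, which forces every tuple to lie on a cycle. The extra detail you supply (adjugate/Cramer over a commutative ring, and the finite-order argument) just fills in what the paper relegates to footnotes.
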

\begin{proof}
Since $\det(T)=2$ is invertible\footnote{An integer $x$ is invertible modulo $m$ if and only if $x$ and $m$ are coprime.} modulo $m$, the matrix $T$ is invertible\footnote{A matrix is invertible if and only if its determinant is invertible.}, hence $T$ is bijective. Consequently, we can unambiguously move backward in the eventually periodic T-sequence determined by a tuple $\textbf{a}$ of $\Z^n_m$, so $\textbf{a}$ belongs to a cycle.
\end{proof}

For $n$ even, things are a bit more complicated and require the introduction of a few new notations. We denote the alternating sum of components of a tuple $\textbf{a}\in\Z^n_m$ by $$\sigma(\textbf{a})=\sum_{i=0}^{n-1}(-1)^ia_i\mod m.$$

\begin{proposition}\label{prop:cycles_neven}
Let $m>2$ be odd and $n>0$ be even. If $m$ and $n$ are coprime, then a tuple $\textbf{a}\in\Z^n_m$ belongs to a cycle if and only if $\sigma(\textbf{a})=0$.
\end{proposition}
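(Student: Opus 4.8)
The plan is to identify $\CC_m^n$ with the kernel $V=\aco{\textbf{a}\in\ZZ_m^n\colon\sigma(\textbf{a})=0}$ of the alternating-sum map, by showing that $T$ restricts to a \emph{bijection} of $V$ and that conversely every cyclic tuple lies in $V$. The guiding intuition is that, writing a tuple as a polynomial $a(x)=\sum_i \textbf{a}_ix^i$ in $\ZZ_m\cro{x}/(x^n-1)$, the map $T$ is (up to a unit) multiplication by $1+x$ while $\sigma$ is evaluation at $x=-1$; since $1+x$ vanishes at $x=-1$, the map $T$ annihilates exactly the $x=-1$ component, which is the part measured by $\sigma$.

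First I would record two elementary facts. (i) $\sigma\circ T=0$: using $\cro{T\textbf{a}}_i=\textbf{a}_i+\textbf{a}_{i+1}$ and that $n$ is even, the alternating sum $\sigma(T\textbf{a})$ telescopes to $0$. (ii) $\ker T$ consists exactly of the alternating tuples $a_0(1,-1,1,\dots,-1)$ with $a_0\in\ZZ_m$: indeed $T\textbf{a}=\textbf{0}$ forces $\textbf{a}_{i+1}=-\textbf{a}_i$ for all $i$, and $n$ even makes this constraint consistent with the cyclic indexing.

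The easy direction is immediate: if $\textbf{a}$ belongs to a cycle then $\textbf{a}=T^P\textbf{a}=T(T^{P-1}\textbf{a})\in\im T$, so $\sigma(\textbf{a})=0$ by (i); this needs no coprimality. For the converse, fact (i) gives $\im T\subseteq V$, so $T$ maps the finite set $V$ into itself, and it suffices to prove $T|_V$ injective: an injective self-map of a finite set is a bijection, and every point of a finite set under a bijection is periodic, hence lies in a cycle. Injectivity reduces to $\ker T\cap V=\aco{\textbf{0}}$: for $\textbf{a}=a_0(1,-1,\dots,-1)\in\ker T$ one computes $\sigma(\textbf{a})=\sum_i(-1)^i(-1)^ia_0=na_0$, so $\textbf{a}\in V$ means $na_0\equiv 0\mod m$. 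This is precisely where $\gcd(n,m)=1$ enters: $n$ is invertible modulo $m$, forcing $a_0=0$ and thus $\textbf{a}=\textbf{0}$.

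I expect no serious obstacle, the content being concentrated in getting the two computations (i) and (ii) right; the only genuinely necessary hypothesis, $\gcd(n,m)=1$, surfaces exactly once, at the final invertibility of $n$ modulo $m$. The one point requiring a little care is the logical packaging of the converse: one must note that $T(V)\subseteq V$ together with injectivity yields surjectivity by finiteness, and then read off periodicity from the resulting permutation of $V$.
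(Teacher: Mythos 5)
Your proof is correct, and its overall strategy coincides with the paper's: both arguments show that $T$ restricted to $V=\aco{\textbf{a}\in\ZZ_m^n\colon\sigma(\textbf{a})=0}$ is a bijection of a finite set and read off periodicity from that, and both get necessity from $\im T\subseteq V$. Where you genuinely differ is in how bijectivity is established. The paper solves the system $T\textbf{x}=\textbf{a}$ explicitly: it parametrizes all $m$ preimages of an even tuple by the free choice of $x_0$ and then uses invertibility of $n$ modulo $m$ to show that exactly one of these preimages is itself even, giving surjectivity of $T|_V$ together with a preimage count. You instead compute $\ker T$ (the alternating tuples $a_0(1,-1,\dots,1,-1)$, with $n$ even needed for cyclic consistency), note that $\sigma$ restricted to this kernel is multiplication by $n$, and conclude injectivity of $T|_V$, hence bijectivity by finiteness. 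The two appearances of $\gcd(n,m)=1$ are the same fact in different clothing: the paper solves $nx_0\equiv c\mod m$ uniquely, you solve $na_0\equiv 0\mod m$ uniquely. Your route is a little cleaner and isolates the hypothesis more transparently; what the paper's explicit computation buys is the statement that each even tuple has exactly $m$ preimages with exactly one of them even, which is reused verbatim in the proof of Theorem \ref{thm:characterization} --- though your description of the fibers of $T$ as cosets of $\ker T$ recovers that count just as easily.
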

\begin{proof}
We first show that the condition is sufficient. Let $\textbf{a}\in\Z^n_m$ be such that $\sigma(\textbf{a})=0$. Finding a preimage $\textbf{x}$ of $\textbf{a}$ is equivalent to solving the system
\begin{equation}\label{eq:syst}
\begin{cases}
    x_0+x_1\equiv a_0,\\
    x_1+x_2\equiv a_1,\\
    \quad\vdots\\
    x_{n-1}+x_0\equiv a_{n-1},
\end{cases}\textrm{which is equivalent to } 
\begin{cases}
    x_1\equiv a_0-x_0,\\
    x_2\equiv a_1-a_0+x_0,\\
    x_3\equiv a_2-a_1+a_0-x_0,\\
    \quad\vdots\\
    x_{n-1}\equiv a_{n-2}-a_{n-3}+\dots+a_0-x_0,\\
    x_0\equiv a_{n-1}-a_{n-2}+\dots+a_1-a_0+x_0.
\end{cases}
\end{equation}
All components of $\textbf{x}$ are determined by the value chosen for $x_0$ and the last equation is satisfied since $\sigma(\textbf{a})=0$. Thus, $\textbf{a}$ has exactly $m$ preimages. Moreover, $m$ and $n$ are coprime, so $n$ is invertible, hence the equation $\sigma(\textbf{x})\equiv nx_0-(n-1)a_0+(n-2)a_1-\dots+2a_{n-3}-a_{n-2}\equiv 0\pmod{m}$ has exactly one solution $x_0$. Thus, $\textbf{a}$ has exactly one preimage $\textbf{x}$ with $\sigma(\textbf{x})=0$. 

Therefore, the map $T$ restricted to the set $\aco{\textbf{a}\in\Z^n_m:\sigma(\textbf{a})=0}$ is a one-to-one correspondence and we can conclude as in Proposition \ref{prop:cycles_nodd}.

The last equation of (\ref{eq:syst}) shows that the condition is necessary.
\end{proof}

The following result, concerning $\Z_2^n$, is shown in \cite{Lu} and in \cite{MST} for the Ducci map, which is the same as our map in this case.

\begin{proposition}\label{prop:cycles_2}
In $\Z^n_2$, we have $\im(T)=\aco{\textbf{a}\in\Z^n_2 : \abs{\textbf{a}}=0}$ and every $\textbf{a}\in\im(T)$ has exactly two preimages. For odd $n$, a tuple $\textbf{a}\in\Z_2^n$ belongs to a cycle if and only if $\abs{\textbf{a}}=0$.
\end{proposition}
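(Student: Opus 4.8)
The plan is to treat all three assertions as exercises in linear algebra over the field $\ZZ_2$, where $T$ is a genuine $\ZZ_2$-linear map. First I would compute the kernel: $T\textbf{x}=\textbf{0}$ forces $x_i+x_{i+1}=0$, i.e. $x_{i+1}=x_i$, for every $i$ (indices mod $n$), so $\ker T=\aco{\textbf{0},\textbf{1}}$ where $\textbf{1}=(1,\dots,1)$, independently of the parity of $n$. In particular $\abs{\ker T}=2$ (as a set cardinality), so rank--nullity gives $\abs{\im T}=2^{n-1}$, and every fiber $T^{-1}(\textbf{a})$ with $\textbf{a}\in\im T$ is a coset of $\ker T$ and hence has exactly two elements. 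This already yields the claim on preimages.

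For the description of $\im T$, I would note the inclusion $\im T\subseteq\aco{\textbf{a}:\abs{\textbf{a}}=0}$: if $\textbf{a}=T\textbf{x}$ then $\abs{\textbf{a}}=\sum_i(x_i+x_{i+1})=2\abs{\textbf{x}}\equiv 0\mod 2$. Writing $W=\aco{\textbf{a}\in\ZZ_2^n:\abs{\textbf{a}}=0}$, we see that $W$ is the kernel of the (surjective) linear functional $\abs{\cdot}$, hence a subspace of dimension $n-1$, so $\abs{W}=2^{n-1}=\abs{\im T}$; combined with the inclusion this forces $\im T=W$. Alternatively one may solve the cyclic system $x_i+x_{i+1}=a_i$ directly, exactly as in the proof of proposition \ref{prop:cycles_neven}, where solvability is governed precisely by the consistency condition $\abs{\textbf{a}}=0$ and the single free parameter $x_0\in\ZZ_2$ produces the two preimages.

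For the last assertion, fix $n$ odd. Any tuple $\textbf{a}$ lying in a cycle of length $P\geq 1$ satisfies $\textbf{a}=T^P\textbf{a}=T(T^{P-1}\textbf{a})\in\im T=W$, which gives the ``only if'' direction (and in fact for every $n$). For the converse I would show that $T$ restricts to a bijection of $W$. Since $\im T=W$ we have $T(W)\subseteq W$, so it suffices to prove $T|_W$ injective, i.e. $\ker T\cap W=\aco{\textbf{0}}$. The only nonzero candidate is $\textbf{1}$, and $\abs{\textbf{1}}=n\equiv 1\mod 2$ because $n$ is odd, so $\textbf{1}\notin W$ and injectivity follows. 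An injective self-map of the finite set $W$ is a permutation, so every element of $W$ is periodic, i.e. belongs to a cycle; hence $W\subseteq\CC_2^n$, completing the equality $\CC_2^n=W$.

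The single point where the argument genuinely invokes the parity of $n$ — and the main obstacle to dropping that hypothesis — is the step $\textbf{1}\notin W$. For even $n$ the all-ones tuple lies in $W$, $T|_W$ ceases to be injective, and the eventual image becomes a proper subspace of $W$, so this clean characterization simply fails. Everything else (the kernel computation, the two-element fiber count, and the inclusion $\im T\subseteq W$) is parity-independent and constitutes the routine part of the proof.
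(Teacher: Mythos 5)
Your proof is correct and complete. Note that the paper itself does not prove Proposition \ref{prop:cycles_2} at all: it simply cites the literature (Ludington-Young and others) for the Ducci-map version, so you have supplied a self-contained argument where the paper offers none. Your route is clean linear algebra over the field $\ZZ_2$: the kernel computation $\ker T=\aco{\textbf{0},\textbf{1}}$ gives the two-element fibers via rank--nullity; the inclusion $\im T\subseteq\aco{\textbf{a}:\abs{\textbf{a}}=0}$ plus a dimension count gives equality of the image with that hyperplane; and for odd $n$ the observation $\abs{\textbf{1}}=n\not\equiv 0$ shows $T$ restricts to a permutation of the hyperplane, so every even tuple is periodic. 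This parallels the strategy the paper uses for odd $m$ in Propositions \ref{prop:cycles_nodd} and \ref{prop:cycles_neven} (exhibit $T$ as a bijection of the candidate set and conclude that every element is periodic), and your closing remark correctly identifies why the characterization breaks for even $n$, which is exactly the gap that Theorem \ref{thm:Lu} ($r$-evenness) is designed to fill. One could quibble that for the "only if" direction you write $\textbf{a}=T^P\textbf{a}$ with $P\ge 1$; this is precisely what membership in a cycle means under Definition \ref{def:period}, so no gap there.
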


If $n$ is even, the two preimages of a tuple $\textbf{a}\in\im(T)$ are either both in $\im(T)$ or both in $\Z^n_2\backslash\im(T)$. Thus we have to find a way to characterize tuples of $\im(T)$ that have preimages in $\im(T)$.

That has actually already been done by Ludington-Young in \cite{Lu,LY}, where the following definition and theorem \ref{thm:Lu} come from. We only consider here tuples of $\Z^n_2$. A tuple $\textbf{a}$ is \emph{even} if $\abs{\textbf{a}}=0$. Suppose $n=2^rn'$ where $n'$ is odd, we say a tuple $\textbf{a}\in\Z^n_2$ is \emph{r-even} if
\[\sum^{k-1}_{i=0}a_{2^ri+j}\equiv 0\pmod{2}\]for $j=0,\dots,2^r-1$. For example, if $n=12$, then $\textbf{a}$ is $2$-even if 
\[a_0+a_4+a_8\equiv 0,\quad a_1+a_5+a_9\equiv 0,\quad a_2+a_6+a_{10}\equiv 0\quad\textrm{and}\quad a_3+a_7+a_{11}\equiv 0.\]
\begin{theorem}\label{thm:Lu}
Let $n=2^rn'$ with $n'$ odd. A tuple of $\Z^n_2$ belongs to a cycle if and only if it is $r$-even.
\end{theorem}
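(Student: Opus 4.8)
The plan is to reduce Theorem \ref{thm:Lu} to the already-solved odd case by passing to the iterate $T^{2^r}$. First I would record the elementary characterization that a tuple $\textbf{a}$ belongs to a cycle of $T$ if and only if $T^P\textbf{a}=\textbf{a}$ for some $P\geq 1$; since any such $P$ may be replaced by the multiple $2^rP$, this is equivalent to $\textbf{a}$ belonging to a cycle of the map $T^{2^r}$. Thus $\CC_2^n$ coincides with the set of tuples lying in a cycle of $T^{2^r}$, and it suffices to analyse the dynamics of $T^{2^r}$.

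By Lemma \ref{lem:I+H} we have $T^{2^r}=I+H^{2^r}$ over $\ZZ_2$, so $[T^{2^r}\textbf{a}]_i=a_i+a_{i+2^r}$. The key structural observation is that the shift $i\mapsto i+2^r$ of the index set $\{0,\dots,n-1\}$, with $n=2^rk$, splits it into exactly $2^r$ orbits, one for each residue $j=0,\dots,2^r-1$, each of size $k$ and of the form $\{j,\,j+2^r,\dots,j+(k-1)2^r\}$. Restricting $T^{2^r}$ to the coordinates of a single orbit and writing $b_\ell=a_{j+\ell 2^r}$, the formula $[T^{2^r}\textbf{a}]_{j+\ell 2^r}=b_\ell+b_{\ell+1}$ (indices read modulo $k$) shows that $T^{2^r}$ acts on each orbit exactly as the standard map $T$ acts on $\ZZ_2^k$. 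Hence $T^{2^r}$ decomposes as a direct sum of $2^r$ independent copies of $T$ on $\ZZ_2^k$, and $\textbf{a}$ lies in a cycle of $T^{2^r}$ if and only if each of its $2^r$ orbit-subtuples lies in a cycle of $T$ on $\ZZ_2^k$.

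Since $k$ is odd, I would now invoke the odd case of Proposition \ref{prop:cycles_2}: a tuple of $\ZZ_2^k$ belongs to a cycle if and only if its component sum vanishes modulo $2$. Applied to the $j$-th orbit-subtuple, this says precisely that $\sum_{\ell=0}^{k-1}a_{j+\ell 2^r}\equiv 0\mod 2$, which, ranging over $j=0,\dots,2^r-1$, is exactly the definition of $\textbf{a}$ being $r$-even. Combining the three steps yields $\textbf{a}\in\CC_2^n$ if and only if $\textbf{a}$ is $r$-even, as claimed.

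The only delicate points are bookkeeping rather than conceptual: one must check that the orbit decomposition is genuinely $T^{2^r}$-invariant (immediate from $[T^{2^r}\textbf{a}]_i$ depending only on $a_i$ and $a_{i+2^r}$) and that the cyclic shift induced on an orbit of size $k$ really matches the standard $T$ on $\ZZ_2^k$ after relabelling indices by $\ell\mapsto j+\ell 2^r$. I expect the main obstacle, if any, to be verifying cleanly that passing from $T$ to $T^{2^r}$ leaves the set of tuples in a cycle unchanged; but as noted this is immediate, since cycle membership is witnessed by an equation $T^P\textbf{a}=\textbf{a}$ whose exponent can always be inflated to a multiple of $2^r$.
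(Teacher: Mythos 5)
Your proof is correct. Note, however, that the paper does not actually prove Theorem \ref{thm:Lu}: it imports the statement from Ludington-Young \cite{Lu,LY}, so there is no in-paper argument to compare against. Your reduction is clean and self-contained given what the paper does provide: the equivalence between lying in a cycle of $T$ and lying in a cycle of $T^{2^r}$ is immediate for a finite dynamical system, Lemma \ref{lem:I+H} gives $T^{2^r}=I+H^{2^r}$ over $\ZZ_2$, the index set genuinely splits into $2^r$ shift-orbits of size $k$ on each of which $T^{2^r}$ acts as the standard $T$ on $\ZZ_2^k$, and the odd case of Proposition \ref{prop:cycles_2} then translates cycle membership of each orbit-subtuple into exactly the $r$-evenness conditions. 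It is worth observing that your route differs in flavour from the paper's proof of the odd-prime analogue, Theorem \ref{thm:characterization}: there the author shows directly that $r$-evenness is preserved by $T$ and that each $r$-even tuple has a unique $r$-even preimage, concluding that $T$ restricted to the $r$-even tuples is a bijection; your product-decomposition argument instead reduces everything to the base case $k$ odd, which is arguably more transparent but leans on Proposition \ref{prop:cycles_2}, itself only cited (not proved) in the paper. Either way the logic is sound; the one point worth stating explicitly if you write this up is that a point of a product of finite dynamical systems is periodic if and only if each coordinate is periodic (take the least common multiple of the coordinate periods for the forward direction).
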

Proposition \ref{prop:cycles_2} turns out to be the special case $r=0$ of this theorem. Indeed, a tuple $\textbf{a}$ is $0$-even if and only if $\abs{\textbf{a}}=0$.

We now generalize this characterization to odd primes. 
\begin{definition}
Let $n=p^rn'$ with $r=v_p(n)$. We say a tuple $\textbf{a}$ of $\Z^n_p$ is \emph{even} if $\sigma(\textbf{a})=0$. We note \[\sigma_j(\textbf{a})=\sum^{n'-1}_{i=0}(-1)^ia_{p^ri+j}\mod p\] for $j=0,\dots,p^r-1$. We say $\textbf{a}$ is \emph{r-even} if $\sigma_j(\textbf{a})=0$ for all $j$.
\end{definition}

\begin{theorem}\label{thm:characterization}
Let $p>2$ be a prime and $n=p^rn'$ even with $r=v_p(n)$. A tuple of $\Z^n_p$ belongs to a cycle if and only if it is r-even.
\end{theorem}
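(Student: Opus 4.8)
The plan is to reduce the problem on $\ZZ_p^n$ to $p^r$ independent copies of the same problem on $\ZZ_p^k$, where Proposition \ref{prop:cycles_neven} already supplies the answer. The starting observation is that a tuple lies on a cycle of $T$ if and only if it lies on a cycle of $T^{p^r}$: the tuples in a cycle are exactly the periodic points, and a point is periodic for $T$ precisely when it is periodic for the fixed positive power $T^{p^r}$ (if $T^P\textbf{a}=\textbf{a}$ then $\paren{T^{p^r}}^P\textbf{a}=\textbf{a}$, and conversely $\paren{T^{p^r}}^Q\textbf{a}=\textbf{a}$ gives $T^{p^rQ}\textbf{a}=\textbf{a}$). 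Hence it suffices to characterize which tuples lie on a cycle of the map $T^{p^r}$.

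By Lemma \ref{lem:I+H} we have $T^{p^r}\equiv I+H^{p^r}\mod p$, so $[T^{p^r}\textbf{a}]_i\equiv \textbf{a}_i+\textbf{a}_{i+p^r}\mod p$. Since $n=p^rk$, the shift $i\mapsto i+p^r$ splits the index set $\aco{0,\dots,n-1}$ into $p^r$ orbits of length $k$, one for each residue $j=0,\dots,p^r-1$. For each such $j$ I would introduce the extraction map $\pi_j:\ZZ_p^n\rightarrow\ZZ_p^k$ sending $\textbf{a}$ to $\paren{a_j,a_{j+p^r},\dots,a_{j+(k-1)p^r}}$, in the spirit of the maps $S_r$ of Proposition \ref{prop:base_case}. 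A direct check, using that $a_{j+kp^r}=a_j$ by the cyclic convention on indices, gives $\pi_j\paren{T^{p^r}\textbf{a}}\equiv T\pi_j(\textbf{a})\mod p$, where on the right $T$ is the map on $\ZZ_p^k$. As the $\pi_j$ together form an isomorphism $\ZZ_p^n\cong\paren{\ZZ_p^k}^{p^r}$ intertwining $T^{p^r}$ with the $p^r$-fold direct sum of $T$ on $\ZZ_p^k$, the tuple $\textbf{a}$ lies on a cycle of $T^{p^r}$ if and only if each $\pi_j(\textbf{a})$ lies on a cycle of $T$ in $\ZZ_p^k$.

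It then remains to apply Proposition \ref{prop:cycles_neven} to $\ZZ_p^k$. Its hypotheses hold here: $p>2$ is odd, and since $n=p^rk$ is even while $p^r$ is odd, $k$ is even; moreover $\gcd(k,p)=1$ because $r=v_p(n)$. Thus $\pi_j(\textbf{a})$ lies on a cycle if and only if $\sigma\paren{\pi_j(\textbf{a})}\equiv 0\mod p$. Finally I would observe that $\sigma\paren{\pi_j(\textbf{a})}=\sum_{i=0}^{k-1}(-1)^i a_{p^ri+j}=\sigma_j(\textbf{a})$, so the condition that $\pi_j(\textbf{a})$ lies on a cycle for every $j$ is exactly the condition $\sigma_j(\textbf{a})=0$ for all $j$, i.e.\ that $\textbf{a}$ is $r$-even. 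The main point to get right is the intertwining identity $\pi_j\paren{T^{p^r}\textbf{a}}\equiv T\pi_j(\textbf{a})$ together with the fact that the $\pi_j$ genuinely decompose $T^{p^r}$ as a direct sum; this is where Lemma \ref{lem:I+H} (the reduction of $T^{p^r}$ to $I+H^{p^r}$ modulo $p$) does all the work, and everything afterwards is a clean application of the even-$n$ characterization.
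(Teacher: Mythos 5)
Your proof is correct, and it takes a genuinely different route from the paper's. The paper argues via preimages: it first checks necessity directly (r-evenness is preserved by $T$ and holds on the basic cycle), and then, for sufficiency, shows that an r-even tuple $\textbf{a}$ has exactly $p$ preimages $\textbf{b}^{(l)}=\textbf{b}^{(0)}+(l,-l,\dots,l,-l)$, that the quantities $\sigma_j\paren{\textbf{b}^{(0)}}$ satisfy $\sigma_0\equiv-\sigma_1\equiv\sigma_2\equiv\cdots$, and that exactly one choice of $l$ kills $\sigma_0$ and hence all $\sigma_j$; thus $T$ restricted to the r-even tuples is a bijection of a finite set and every r-even tuple is periodic. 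You instead pass to the power $T^{p^r}\equiv I+H^{p^r}\mod p$ (Lemma \ref{lem:I+H}), observe that periodicity for $T$ and for $T^{p^r}$ coincide, and use the index decomposition $i=j+lp^r$ to identify $T^{p^r}$ with $p^r$ independent copies of $T$ acting on $\ZZ_p^k$, where Proposition \ref{prop:cycles_neven} applies since $k$ is even and coprime to $p$; the conditions $\sigma\paren{\pi_j(\textbf{a})}=\sigma_j(\textbf{a})=0$ then assemble into r-evenness. All the steps you flag as needing care (the intertwining $\pi_j\circ T^{p^r}=T\circ\pi_j$ using $a_{j+kp^r}=a_j$, the fact that a point of a direct sum is periodic iff each coordinate is, and the hypotheses of Proposition \ref{prop:cycles_neven} for $\ZZ_p^k$) do check out. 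Your argument is arguably more conceptual --- it explains the r-even condition as the even condition on each of the $p^r$ interleaved subtuples, in the same spirit as the $S_r$ maps of Proposition \ref{prop:base_case}, and it delivers both directions of the equivalence at once --- whereas the paper's argument is more hands-on and yields the slightly finer statement that $T$ itself (not just $T^{p^r}$) restricts to a bijection of the r-even tuples.
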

\begin{proof}
The congruences below are all modulo $p$.

First, note that $T\textbf{a}$ is r-even if $\textbf{a}$ is r-even. Indeed, if $\textbf{a}$ is r-even, then $\sigma_j(T\textbf{a})\equiv \sigma_j(\textbf{a})+\sigma_{j+1}(\textbf{a})\equiv 0$ for each $j$. By Lemma \ref{lem:I+H}, we have that $T^{p^r}\textbf{e}\equiv \textbf{e}+H^{p^r}\textbf{e}$ is r-even. Thus, every tuple of a cycle must be r-even by linearity of r-evenness, hence the condition is necessary.

We now show that it is sufficient. Let $\textbf{a}\in\Z^n_p$ be r-even. By (\ref{eq:syst}) (it is here that we use the condition that $n$ is even) and since r-evenness implies evenness, the tuple $\textbf{a}$ has $p$ preimages. Let $\textbf{b}^{(0)}$ be one of these, hence all preimages are given by $\textbf{b}^{(l)}=\textbf{b}^{(0)}+(l,-l,\dots,l,-l)$ for $l=0,\dots,p-1$. To simplify notation, we write $\sigma_j$ for $\sigma_j\paren{\textbf{b}^{(0)}}$.

Since $a_i\equiv b^{(0)}_i+b^{(0)}_{i+1}$, \[0\equiv\sigma_0(\textbf{a})\equiv\sum^{n'-1}_{i=0}(-1)^ia_{p^ri}\equiv\sum^{n'-1}_{i=0}(-1)^i\paren{b^{(0)}_{p^ri}+b^{(0)}_{p^ri+1}}\equiv\sigma_0+\sigma_1\]
and, similarly, $\sigma_1+\sigma_2$, $\sigma_2+\sigma_3$,$\dots$,$\sigma_{p^r-2}+\sigma_{p^r-1}$ are all $\equiv 0$ and $\sigma_{p^r-1}-\sigma_{0}$ too\footnote{The negative sign comes from the equality $(-1)^ib^{(0)}_{p^ri+p^r-1+1}=(-1)^ib^{(0)}_{p^r(i+1)}$.}. Hence, \[\sigma_0\equiv -\sigma_1\equiv\sigma_2\equiv -\sigma_3\equiv\dots\equiv -\sigma_{p^r-2}\equiv\sigma_{p^r-1}.\]

Since $n'$ is invertible modulo $p$, the equation $\sigma_0\paren{\textbf{b}^{(l)}}\equiv\sigma_0+n'l\equiv 0\pmod{p}$ has exactly one solution $l\equiv -\sigma_0/n'\pmod{p}$, hence $\textbf{b}^{(l)}$ is the only r-even preimage of $\textbf{a}$.

Therefore, the map $T$ restricted to the set of r-even tuples of $\Z^n_p$ is bijective and the proof is complete.
\end{proof}

\section{Open questions}

We can see the iterations of the map $T$ on the set $\mathcal{C}_m^n$ (which is bijective when restricted to $\mathcal{C}_m^n$) as the action of the group $\aco{T^k\colon k\in\Z}$ on this set, splitting it into orbits. What are the possible sizes for these orbits? The tuple $(0,\dots,0)$ has an orbit of size 1, whereas the basic tuple, after enough iterations, generates an orbit of size $P(m,n)$. What are the values between $1$ and $P(m,n)$ that are the size of some orbit?

\medskip
Further questions arise naturally. What is the largest pre-period that can happen? How can the results of this paper be generalized to any linear map of $\Z_m^n$? Do there exist explicit formulas to find $P(p,n)$ for every prime $p$ and positive integer $n$?

\medskip

MSC2010: 05A10, 11B50, 11B75

\medskip

\end{document}